\def\mineappendix{
        \setcounter{section}{1}
        \setcounter{subsection}{0}
        \def\thesection{\Alph{section}}
        \def\sectionap{\@startsection  {section}{1}{\z@}
                        {-3.5ex plus-1ex minus-.2ex} {0ex plus.2ex}
                        {\reset@font\Large\bf  Appendix:  \, }
                        }
        }
\def\Proclaim #1. #2\par{\bigbreak\noindent{\sc#1.\enspace}{\it#2}\par}
\newtheorem{theorem}{Theorem}[section]
\newtheorem{lemma}[theorem]{Lemma}
\newtheorem{proposition}[theorem]{Proposition}
\newtheorem{corollary}[theorem]{Corollary}
\title{Connecting Hodge integrals to Gromov-Witten invariants by Virasoro operators}
\author{Xiaobo Liu \thanks{Research of the first author was partially supported by  NSFC research fund 11431001 and NSFC Tianyuan special funds 11626241.}, \hspace{6pt} Haijiang Yu}
\date{}
\begin{document}

\maketitle

\begin{abstract}
In this paper, we show that the generating function for linear Hodge integrals over moduli spaces of stable maps to a nonsingular projective variety $X$ can be connected to the generating function for Gromov-Witten invariants of $X$ by a series of differential operators $\{ L_m \mid m \geq 1 \}$ after a suitable change of variables. These operators satisfy the Virasoro bracket relation and can be seen as a generalization of the Virasoro operators appeared in the Virasoro constraints for Kontsevich-Witten tau-function in the point case. This result is an extension of the work in \cite{LW} for the point case which solved a conjecture of Alexandrov.
\end{abstract}
%
%\title[Running \AmS-\LaTeX{}
%\tableofcontents
%---------------------------------------------------------------------
%---------------------------------------------------------------------

\section{Introduction}

It is well known that the generating function for intersection numbers of psi-classes on moduli spaces of stable curves
$\overline{\cal M}_{g,n}$ is
a tau function of the KdV hierarchy. This was conjectured by Witten and proved by Kontsevich (cf. \cite{Ko}). It was also proved
by Kazarian that the generating function of linear Hodge integrals over $\overline{\cal M}_{g,n}$ is a tau
function of KP hierarchy(cf. \cite{Ka}).
Motivated by the fact that the space of
all tau functions of KP hierarchy is a homogeneous space over which the group $\widehat{GL}(\infty)$ acts transitively,
 Alexandrov conjectured that the Hodge tau-function and Kontsevich-Witten tau-function can be connected by a linear combination
 of Virasoro operators \cite{Ale2014}. He also verified a weak version of this conjecture in \cite{Ale2015}.
The first author of this paper and Gehao Wang proved Alexandrov's conjecture in \cite{LW}. This result gives an explicit
$\widehat{GL}(\infty)$ operator which connecting these two tau functions arising from the intersection theory
 over the moduli space of curves  $\overline{\cal M}_{g,n}$. The main purpose of this paper is to extend
this result to moduli spaces $\overline{\cal M}_{g,n}(X, A)$ of degree $A \in H_2(X, \mathbb{Z})$ stable maps  with any nonsingular projective variety $X$ as the target.

Hodge integrals over $\overline{\cal M}_{g,n}(X, A)$ were introduced by Faber and Pandharipande in \cite{FP2000}. They are generalizations of Gromov-Witten invariants. It was proven in \cite{FP2000} that Hodge integrals can be reconstructed from Gromov-Witten invariants.
In \cite{DLYZ}, the authors gave an algorithm to represent the generating functions of Hodge integrals in terms of Gromov-Witten potentials and genus-0 primary two point functions. They also introduced Hodge potentials associated to semisimple Frobenius manifolds whose exponential is a tau function of a new integrable hierarchy, called Hodge hierarchy.  We hope that results in this paper may be useful in understanding structures of tau functions for the Hodge hierarchy.

Another interest for studying Hodge integrals is that their universal equations might be simpler. It is well known that relations
among  $\psi$-classes and boundary classes in the tautological ring of $\overline{\cal M}_{g,n}$ produce universal equations for Gromov-Witten invariants. Such equations play important role in studying properties for Gromov-Witten invariants. For example they are crucial ingredients in the study of low genus Virasoro conjecture. Usually equations from low degree tautological relations are more powerful than those from higher degree tautological relations.  From results in \cite{Io}, \cite{FP2005}, and \cite{GV}, we know there should exist degree $g$ tautological relations on $\overline{\cal M}_{g,n}$. But currently we do not have explicit formula for degree $g$
relations among only $\psi$-classes and boundary classes when $g>4$. Moreover known relations in low genera are also very complicated. However simple formulas might be obtained if Hodge classes are allowed. For example, a recursive degree $g$ formula including Hodge classes and $\psi$-classes for all genera was found in \cite{AS}. This should give universal equations for generating functions of Hodge integrals. It is interesting to investigate what are consequences of such kind of equations for Gromov-Witten invariants. This requires better understanding of relations between
Hodge integrals and Gromov-Witten invariants. We hope that results in this paper will also shed new lights in this direction.

Let $F_{g,H}^{X}(\mathbf{t},u)$ be the generating function of genus $g$ linear Hodge integrals over moduli spaces of
stable maps with target $X$. Precise definition of $F_{g,H}^{X}(\mathbf{t},u)$ will be given in equation \eqref{F_gH^X}.
Set $Z_H^X(\mathbf{t},u) = \exp(\sum\limits_{g\geq 0}\hbar^{g-1}F_{g,H}^{X})$ where $\hbar$ is a parameter.
Then $Z^X(\mathbf{t}) := Z_H^X(\mathbf{t},0)$ is the generating function of descendent Gromov-Witten invariants of $X$.
After a suitable change of variables $ \mathbf{t}\rightarrow (u,\mathbf{q}) $ which will be given in Section~\ref{sec:change_variable},
 $Z_H^X$ and $Z^X$ will become functions of $u$ and $\mathbf{q}=(q_n^{\alpha} \mid n \in \mathbb{Z}_{>0}, \alpha=0, 1, \ldots, N)$
 where $N+1$ is the dimension of the space of cohomology classes of $X$.
These functions will be denoted by $Z_H^X (\mathbf{q},u)$ and $Z^X(\mathbf{q})$ respectively. The main result of this paper is that these
two functions can be connected by a sequence of differential operators of the form
\begin{equation}\label{Lm}
L_m := \sum\limits_{k>0} \sum_{\alpha} (k+m)q_k^{\alpha}\frac{\partial}{\partial q_{k+m}^{\alpha}}
   +\frac{\hbar}{2}\sum\limits_{a+b=m} \sum_{\mu, \nu} ab \eta^{\mu\nu} \frac{\partial^2}{\partial q_a^{\mu}\partial q_b^{\nu}}
\end{equation}
for $m\geq 1$, where $\eta^{\mu\nu}$ are entries for the inverse matrix of the intersection pairing on $X$.
We can check that these operators satisfy the Virasoro bracket relation
$$[ L_m, L_n ] = (m-n)L_{m+n} $$
for all $m, n \geq 1$ (cf. Appendix \ref{app1}).
Actually $L_m$ can be seen as a generalization of the Virasoro operators used in \cite{LW} in the point case.
After a slight shift, these operators also annihilate the Kontsevich-Witten tau function $Z^X$ when $X$ is a point.
The precise statement for the main result of this paper is the following

\begin{theorem}\label{theorem}
The generating functions of linear Hodge integrals and descendent Gromov-Witten invariants can be connected as follows
$$Z_H^X (\mathbf{q},u) = \exp{(\sum\limits_{m\geq 1} a_m u^m L_m)} \cdot Z^X(\mathbf{q}) $$
where  $a_m$ are constants determined by
\begin{equation}\label{a_m}
\exp(\sum_{m>0}a_mz^{1-m}\frac{\partial}{\partial z})\cdot z =\left(-2\log(1-\frac{1}{1+z})-\frac{2}{1+z}  \right)^{-\frac{1}{2}}.
\end{equation}
\end{theorem}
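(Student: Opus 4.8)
The plan is to convert the insertion of Hodge classes over $\overline{\cal M}_{g,n}(X,A)$ into the action of differential operators on the Gromov--Witten generating function $Z^X(\mathbf q)$, and then to recognise the resulting exponentiated operator as $\exp(\sum_{m\geq 1}a_m u^m L_m)$ after the change of variables of Section~\ref{sec:change_variable}. The essential geometric input is Mumford's Grothendieck--Riemann--Roch computation of $\mathrm{ch}(\mathbb E)$, in the form established for moduli of stable maps by Faber and Pandharipande \cite{FP2000}: since the Hodge bundle $\mathbb E$ on $\overline{\cal M}_{g,n}(X,A)$ is pulled back from the source curve, each component $\mathrm{ch}_{2l-1}(\mathbb E)$ is a universal polynomial in the $\psi$-classes at the marked points, the $\kappa$-classes, and the push-forwards of $1$ from the codimension-one boundary strata; the target $X$ enters only when these boundary terms are paired against the virtual class, through the Gromov--Witten splitting axiom with its $\eta^{\mu\nu}$-weights. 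All even components $\mathrm{ch}_{2l}(\mathbb E)$ vanish for positive genus, so by the Newton identity $\log c_u(\mathbb E)=\sum_{l\geq 1}(2l-2)!\,u^{2l-1}\mathrm{ch}_{2l-1}(\mathbb E)$ the Hodge class entering $F_{g,H}^X(\mathbf t,u)$ of \eqref{F_gH^X} is the exponential of a linear combination of the odd $\mathrm{ch}_{2l-1}(\mathbb E)$; composing with the Bernoulli coefficients $B_{2l}/(2l)!$ from Mumford's formula, the overall weight of $\mathrm{ch}_{2l-1}$ becomes $B_{2l}/\bigl(2l(2l-1)\bigr)$.

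Next I would translate each elementary insertion into an operator acting on $Z^X$. Insertions of $\psi_i^a$ and of $\kappa_a$ are produced by first-order differential operators in the variables $\mathbf q$, via the forgetful morphism $\overline{\cal M}_{g,n+1}(X,A)\to\overline{\cal M}_{g,n}(X,A)$, the comparison $\pi^*\psi_i=\psi_i-D_{i,n+1}$, and the relation $\kappa_a=\pi_*\psi_{n+1}^{a+1}$; after the change of variables the first-order part of the operator attached to the insertion of $\mathrm{ch}_{2l-1}(\mathbb E)$ becomes a constant multiple of the Euler-type piece $\sum_{k>0}\sum_\alpha(k+m)q_k^\alpha\,\partial/\partial q_{k+m}^\alpha$ of $L_m$ in \eqref{Lm}, with $m=2l-1$. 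An insertion of a boundary push-forward is produced by a second-order operator: the splitting axiom writes the restriction of the virtual class to a boundary divisor as a sum over a basis $\{\gamma_\mu\}$ of $H^*(X)$ with coefficients $\eta^{\mu\nu}$, and the two node $\psi$-classes become two $\mathbf q$-derivatives; both the non-separating strata (where the genus drops by one, producing the factor $\hbar$ against the $\hbar^{g-1}$-grading of $Z^X$) and the separating strata (a product of two lower potentials) are encoded in the single operator $\tfrac{\hbar}{2}\sum_{a+b=m}\sum_{\mu,\nu}ab\,\eta^{\mu\nu}\,\partial^2/\partial q_a^\mu\partial q_b^\nu$ acting on $Z^X$, which is the quadratic piece of $L_m$, again with $m=2l-1$. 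This step is, clause by clause, the computation carried out in \cite{LW} in the point case, now performed with $H^*(X)$-valued descendent variables and the metric $\eta$ in place of the one-dimensional situation; it goes through because every tautological class entering the GRR formula is pulled back from $\overline{\cal M}_{g,n}$ and is insensitive to the target.

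The remaining, and most substantial, step is the resummation. The GRR formula produces only the odd-index operators $L_{2l-1}$, weighted by $B_{2l}/\bigl(2l(2l-1)\bigr)$, whereas the statement involves all $L_m$. To pass from one to the other I would use the Virasoro bracket $[L_m,L_n]=(m-n)L_{m+n}$ for $m,n\geq 1$ (verified in Appendix~\ref{app1}), which exhibits the $L_m$ as the image of the representation $\ell_m=z^{1-m}\partial_z$ of the positive part of the Witt algebra, and apply the Baker--Campbell--Hausdorff formula to rewrite the exponential of the odd-index operators as $\exp(\sum_{m\geq 1}a_m u^m L_m)$, the even-index generators entering only through iterated commutators. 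Because the Witt-algebra relations are target-independent, this matching is the same computation as in \cite{LW}: carried out on the faithful model $\ell_m=z^{1-m}\partial_z$ it reduces to equating the time-one flow of $\sum_{m>0}a_m z^{1-m}\partial_z$ applied to $z$ with the closed form of the Bernoulli generating series read off from Mumford's formula, which is precisely the right-hand side of \eqref{a_m}. When $X$ is a point the whole argument specialises to \cite{LW}.

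I expect the principal obstacle to be the careful treatment of the boundary contributions of Mumford's formula on $\overline{\cal M}_{g,n}(X,A)$ rather than on $\overline{\cal M}_{g,n}$: one must check that contracted (unstable) source components and the splitting of the virtual class do not spoil the universal shape of the GRR formula, and, crucially, that the node contributions assemble into exactly the $\hbar\sum_{\mu,\nu}\eta^{\mu\nu}\,\partial^2$-term with the correct genus and $\hbar$ bookkeeping, so that the abstract operator identity of \cite{LW} applies after, rather than before, the pairing $\eta$ is introduced. A secondary, more technical difficulty is keeping the change of variables of Section~\ref{sec:change_variable}, the $\hbar$-grading, and the Bernoulli resummation mutually consistent; I would organise the proof so that the operator identity is first established infinitesimally --- matching the action of each $\mathrm{ch}_{2l-1}(\mathbb E)$-insertion operator and checking the commutators --- and only then exponentiated.
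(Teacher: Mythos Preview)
Your proposal has a genuine gap in how it handles the $\kappa$-class contribution from Mumford's formula. The operator $D_l$ of equation~\eqref{Dl} attached to $\mathrm{ch}_{2l-1}(\mathbb E)$ has first-order part $\partial/\partial t^0_{2l} - \sum_{i,\alpha} t_i^\alpha\, \partial/\partial t_{i+2l-1}^\alpha$; the shift piece is Euler-type, but the $\kappa$-term $\partial/\partial t^0_{2l}$ has no counterpart in any $L_m$ and, after Zassenhaus, produces the pure derivation $P_{t,u}$ of Proposition~\ref{e^W=e^Be^Qe^P}. In \cite{LW} this extra piece is disposed of using the Virasoro constraints for the Kontsevich--Witten tau-function, and you implicitly invoke the same mechanism when you say the translation is ``clause by clause, the computation carried out in \cite{LW}''. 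For a general target $X$ no such constraint is available --- the $L_m$ here are not even the Virasoro-conjecture operators, as they ignore $c_1(X)$ --- and this is precisely why the paper uses a \emph{different} change of variables from \cite{LW}: the additional shift $(-1)^n C_{n-1}u^{2(n-1)}$ in \eqref{t->(u,q)} is designed so that $\exp(P_{t,u})$ is absorbed into the substitution itself (Lemma~\ref{lemma}), bypassing Virasoro constraints entirely. Your outline does not address this, and it is the main new idea of the paper.

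There is also a structural mismatch in your operator identification. Under the naive substitution $t_k^\alpha=(2k-1)!!\,q_{2k+1}^\alpha$ the shift by $2l-1$ in the $t$-index becomes a shift by $4l-2$ in the odd $q$-index, with coefficient $(2i-1)!!/(2i+4l-3)!!$ rather than $(k+m)$; so $D_l$ is not a multiple of $L_{2l-1}$, and your proposed BCH resummation ``from odd-index $L_{2l-1}$ to all $L_m$'' never gets off the ground. The paper does not match $D_l$ with a single $L_m$; instead it Zassenhaus-decomposes $\exp(W_u)$ and $\exp(\sum_m a_m u^m L_m)$ \emph{separately} into first- and second-order factors and then matches those factors using the identities \eqref{e^Xq} and \eqref{Q+QW} imported from \cite{LW} together with the new Lemma~\ref{lemma}. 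The constants $a_m$ and equation~\eqref{a_m} enter through this first-order matching, not through a BCH repackaging of odd-index generators.
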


The above theorem can be seen as a generalization of the main result of \cite{LW} which proves Alexandrov's conjecture.
One important ingredient in the proof of \cite{LW} is the Virasoro constraints for Kontsevich-Witten tau-function. This can not be
generalized to arbitrary projective manifold $X$ for two reasons. Firstly, the existence of Virasoro constraints for Gromov-Witten invariants of $X$ is currently only a conjecture, knowing as the Virasoro conjecture (cf. \cite{EHX}). This conjecture has not been proved for general cases. Secondly, the Virasoro operators $L_m$ in this paper are different from the Virasoro operators appeared in the Virasoro conjecture since the latter operators depend on the first Chern class of $X$. Therefore in the proof of Theorem \ref{theorem}, we have to
find a way to bypass the Virasoro constraints. To achieve this, we use a change of variable which is different from that in \cite{LW} even for the point case.

This paper is organized as follows. In Section \ref{sec:pre}, we review the definition of Hodge integrals and  Zassenhaus formula. In Section~\ref{sec:decompW}, we apply Zassenhaus formula to decompose the operator $\exp W_{\omega}$ which connecting $Z^X$ and generating functions of Hodge integrals.  It is interesting to see that Givental's quantization formula arise naturally in this process. Moreover we will also prove a formula (see Corollary \ref{cor}) which is a generalization of Givental's lemma in Section 2.3 of \cite{Gi1}. In Section~\ref{sec:change_variable}, we give the change of variables. In Section~\ref{sec:decompVir}, we decompose exponential of linear
combinations of Virasoro operators using Zassenhaus formula. Finally we prove Theorem \ref{theorem} in Section~\ref{sec:proofThm}.

\section{Preliminaries}
\label{sec:pre}

\subsection{Hodge integral}

Let $X$ be a nonsingular projective variety, $\{ \gamma_0, \gamma_1,\cdots ,\gamma_N \}$ be a graded $\mathbb{Q}$-basis of $H^{*}(X,\mathbb{Q})$, and take $\gamma_0$ to be the identity element. Let $\eta_{\mu\nu} = \int_{X} \gamma_{\mu} \cup \gamma_{\nu}$, and $\eta^{\mu\nu}$ be entries of the matrix which is the inverse of matrix $(\eta_{\alpha \beta})_{(N+1)\times(N+1)}$.

Let $\overline{M}_{g,n}(X,A)$ be the moduli space of stable maps
$f:C\longrightarrow X$ where $(C; p_1,\cdots,p_n)$ is a genus-$g$ nodal curve with $n$ marked points and
$f_*([C])=A \in H_2(X;\mathbb{Z})$. The Hodge bundle $\mathbb{E}$ over $\overline{M}_{g,n}(X,A)$ is the rank $g$ vector bundle with fiber $H^0(C,\omega_C)$ over $(C; p_1,\cdots,p_n;f) \in \overline{M}_{g,n}(X,A)$, where $\omega_C$ is the dualizing sheaf of $C$. Let $ch(\mathbb{E})$ be the Chern character of $\mathbb{E}$ and its degree $k$ component is denoted by $ch_k(\mathbb{E})$. By Mumford's relations (cf. \cite{Mu}), the even components of $ch(\mathbb{E})$ vanish (for all genera). So $ch(\mathbb{E}) = \sum\limits_{l \geq 1} ch_{2l-1}(\mathbb{E})$.

In \cite{FP2000}, Faber and Pandharipande considered {\bf Hodge integrals} of the following form
$$\langle \prod\limits_{j=1}^m ch_{l_j}(\mathbb{E}) \prod\limits_{i=1}^n \tau_{k_i}(\gamma_{\alpha_i}) \rangle_{g,A}^X = \int_{[\overline{M}_{g,n}(X,A)]^{vir}} \prod_{i=1}^n \psi_i^{k_i} \cup ev_i^*(\gamma_{\alpha_i}) \cup \prod\limits_{j=1}^m ch_{l_j}(\mathbb{E}) ,$$
where $[\overline{M}_{g,n}(X,A)]^{vir}$ is the virtual fundamental class of $\overline{M}_{g,n}(X,A)$,
$\psi_i \in H^2(\overline{M}_{g,n}(X,A);\mathbb{Q})$ is the cotangent line class and  $ev_i: \overline{M}_{g,n}(X,A) \longrightarrow X$ is the evaluation map defined by the $i$-th marked point on domain curves. When all classes $ch_{l_j}(\mathbb{E})$ are removed from the
above formula, we get the definition for {\bf descendant Gromov-Witten invariants}
$\langle  \prod\limits_{i=1}^n \tau_{k_i}(\gamma_{\alpha_i}) \rangle_{g,A}^X$.

Let $\mathbf{s}, \mathbf{t}$ be the sets of variables $\{s_n\}$, $\{t_n^{\alpha}\}$  respectively, where the range for $n$ is the set of
all non-negative integers $\mathbb{Z}_{\geq 0}$ and the range of $\alpha$ is $\{0, 1, \ldots, N\}$.
The generating function of genus-$g$ Hodge integrals is defined to be
\[ F_{g,\mathbb{E}}^{X}(\mathbf{t},\mathbf{s}) := \sum\limits_{A \in H_2(X,\mathbb{Z})} Q^{A}
  \langle \exp(\sum_{l \geq 1} s_{2l-1} ch_{2l-1}(\mathbb{E}) ) \exp (\sum\limits_{i=0}^{\infty}\sum\limits_{\alpha=0}^N t^{\alpha}_i \tau_i(\gamma_{\alpha})) \rangle_{g,A}^{X}
\]
where $Q^A$ is an element in the Novikov ring.
Let $ F_{\mathbb{E}}^{X} = \sum\limits_{g \geq 0} \hbar^{g-1} F_{g,\mathbb{E}}^{X}$, where $\hbar$ is a parameter, and set
 $Z_{\mathbb{E}}^X = \exp(F_{\mathbb{E}}^{X})$. When $\mathbf{s}=0$, we get the generating function for descendent Gromov-Witten
 invariants $Z^X(\mathbf{t} )= Z_{\mathbb{E}}^X ( \mathbf{t}, 0)$.

 Using Mumford's Grothendieck-Riemann-Roch formula (cf. \cite{Mu}), it was proved in \cite{FP2000} that $Z_{\mathbb{E}}^X$ satisfies the
   following systems of differential equations: For all $l \geq 1$,
\begin{equation}
\label{DZ=0}
\frac{\partial}{\partial s_{2l-1}} Z_{\mathbb{E}}^X =  \frac{B_{2l}}{(2l)!} D_l \cdot Z_{\mathbb{E}}^X,
\end{equation}
where
\begin{equation} \label{Dl}
 D_l := \frac{\partial}{\partial t^0_{2l}} - \sum\limits_{i=0}^{\infty}\sum\limits_{\alpha=0}^N t^{\alpha}_i\frac{\partial}{\partial t^{\alpha}_{i+2l-1}} + \frac{\hbar}{2} \sum\limits_{i=0}^{2l-2} \sum_{\mu, \nu} (-1)^{i}\eta^{\mu\nu} \frac{\partial}{\partial t^{\mu}_{i}} \frac{\partial}{\partial t^{\nu}_{2l-2-i}}
 \end{equation}
 and $B_{2l}$ are the Bernoulli numbers defined by
$$\frac{x}{e^x-1} = \sum\limits_{m=0}^{\infty} B_{m}\frac{x^m}{m!}.$$

From Equation~\eqref{DZ=0}, one can deduce that
\begin{equation}
\label{Z=e^WZ}
Z_{\mathbb{E}}^X(\mathbf{t},\mathbf{s}) = \exp (\sum\limits_{l\geq 1} \frac{B_{2l}}{(2l)!} \, s_{2l-1} \, D_{l} ) \cdot Z^X(\mathbf{t}).
\end{equation}
This formula is true since both sides  satisfy the system of differential equations \eqref{DZ=0} and have the same initial condition
at $\mathbf{s}=0$ (cf. \cite{Gi1},\cite{Gi2} and \cite{DLYZ}).

{\bf Linear Hodge integrals} are integrals of the following form
$$\langle \lambda_j  \prod\limits_{i=1}^n \tau_{k_i}(\gamma_{\alpha_i}) \rangle_{g,A}^X
:= \int_{[\overline{M}_{g,n}(X,A)]^{vir}} \prod_{i=1}^n \psi_i^{k_i} \cup ev_i^*(\gamma_{\alpha_i}) \cup \lambda_j ,$$
where $\lambda_j = c_j (\mathbb{E})$ is the $j$-th Chern class of $\mathbb{E}$. The generating function of genus-$g$ linear
Hodge integrals is given by
\begin{equation} \label{F_gH^X}
F_{g,H}^{X}(\mathbf{t},u) = \sum\limits_{A \in H_2(X,\mathbb{Z})} Q^{A} \langle \lambda(-u^2) \exp (\sum\limits_{i=0}^{\infty}\sum\limits_{\alpha=0}^N t^{\alpha}_i \tau_i(\gamma_{\alpha})) \rangle_{g,A}^{X}
\end{equation}
where $\lambda(y) := \sum\limits_{j\geq 0} \lambda_{j}y^j$ is the Chern polynomial of $\mathbb{E}$, and $u$ is a parameter.
Let $F_{H}^{X} := \sum\limits_{g \geq 0} \hbar^{g-1} F_{g,H}^{X}$ and
$Z_H^X := \exp(F_{H}^{X})$. When $u=0$, we also recover the generating function
for descendent Gromov-Witten invariants $Z^X(\mathbf{t})= Z^X_H(\mathbf{t},0)$.

When $X$ is a point, the corresponding functions $Z^{pt}(\mathbf{t})$ is a tau function of the KdV hierarchy (cf. \cite{Ko}) and $Z_H^{pt}(\mathbf{t},u)$ is a tau function of the KP hierarchy (cf. \cite{Ka}). Alexandrov's conjecture proved in \cite{LW} asserts that
these two tau functions can be connected by Virasoro operators.

By definition of Chern characters, we have (cf. example 4.4 in \cite{DLYZ})
\begin{equation} \label{lambdach}
\lambda(-u^2)  = \exp( \sum\limits_{l\geq 1} -(2l-2)! u^{2(2l-1)} ch_{2l-1} ).
\end{equation}
Hence $Z_{g,H}^{X}(\mathbf{t},u)$ is equal to $Z_{g,\mathbb{E}}^{X}(\mathbf{t},\mathbf{s})$ after the substitution
\[ s_{2l-1} = -(2l-2)! u^{2(2l-1)} \]
for all $l \geq 1$.

By Equation~\eqref{Z=e^WZ}, we have

\begin{equation}\label{eqn:lem2}
Z^{X}_{H}(\mathbf{t},u) = e^{W_u}\cdot Z^X(\mathbf{t})
\end{equation}
where $W_u$ is defined by
\begin{equation} \label{eqn:Wu}
W_u := -\sum\limits_{l\geq1} \frac{B_{2l}}{2l(2l-1)} u^{2(2l-1)} \, D_{l}.
\end{equation}

\subsection{Zassenhaus formula}

As in \cite{LW}, a crucial ingredient in the proof of Theorem~\ref{theorem} is the
Zassenhaus formula
 which can be considered as the inverse of Baker-Campbell-Hausdorff formula (cf. \cite{CMN}). For two elements $X$ and $Y$ in a Lie algebra $\mathfrak{g}$ over a field of characteristic zero, Zassenhaus formula expands $e^{\alpha(X+Y)}$ as
$$e^{\alpha(X+Y)} = e^{\alpha X} e^{\alpha K_1} e^{\alpha^2 K_2} e^{\alpha^3 K_3}\cdots,$$
where $K_n$ is a homogeneous Lie polynomial in $X$ and $Y$ of degree $n$. $K_1 = Y$ and
$$ K_n = \frac{1}{n}\sum\limits_{i=0}^{n-2} \frac{(-1)^{n-1}}{i!(n-1-i)!} ad_Y^i ad_X^{n-1-i}Y$$
for $n \geq 2$.

In particular, if  $\mathfrak{g}_1$ and $\mathfrak{g}_2$ are Lie subalgebras of $\mathfrak{g}$ satisfying the condition:
$$[\mathfrak{g}_1,\mathfrak{g}_2] \subseteq \mathfrak{g}_2 \quad and \quad [\mathfrak{g}_2,\mathfrak{g}_2] = 0,$$
then for any $X\in \mathfrak{g}_1, Y\in \mathfrak{g}_2,$
\begin{equation}\label{Zassenhaus}
\exp(X+Y) = \exp(X)\exp(\sum\limits_{n=1}^{\infty} \frac{(-1)^{n-1}}{n!} ad_X^{n-1} Y)
\end{equation}
(cf. \cite{LW}).
For example, the operators we mainly concern in this  paper come from the following three types:
$$\mathfrak{g}_1 = \left\{ \left. \sum\limits_{i,j,\mu} a_{ij}^{\mu} t_i^{\mu} \frac{\partial}{\partial t_{i+j}^{\mu}} \right| a_{ij}^{\mu} \ {\rm are \ constants} \right\},$$
$$\mathfrak{g}_2 = \left\{  \left. \sum\limits_{i,\mu} b_{i}^{\mu} \frac{\partial}{\partial t_{i}^{\mu}} \right| b_{i}^{\mu} \ {\rm are \ constants} \right\},$$
$$\mathfrak{g}_2' = \left\{  \left. \sum\limits_{i,j,\mu,\nu} c_{ij}^{\mu\nu} \frac{\partial^2}{\partial t_{i}^{\mu}t_{j}^{\nu}} \right| c_{ij}^{\mu\nu} \ {\rm are \ constants} \right\}.$$
We can check both $\mathfrak{g}_1, \mathfrak{g}_2 \ and \ \mathfrak{g}_1, \mathfrak{g}_2'$ satisfy the above condition.

\section{Decomposition of $W_{\omega}$}
\label{sec:decompW}

Motivated by both equations \eqref{Z=e^WZ} and \eqref{eqn:lem2}, we consider the operator
\begin{equation} \label{denW}
W_{\omega} := \sum_{l=1}^{\infty} \omega_l \, D_l
\end{equation}
where $\omega=(\omega_1, \omega_2, \ldots)$ are parameters. As in \cite{LW}, a crucial step in the proof of Theorem~\ref{theorem}
is to decompose operator $\exp(W_{\omega})$ using
Zassenhaus formula.  It is interesting to see that Givental's quantization operator (cf. \cite{Gi2})  appears naturally in this process.
The main result of this section is Proposition~\ref{e^W=e^Be^Qe^P}, which can be considered as a generalization
of Lemma 14 and Lemma 15 in \cite{LW}.
Since $\omega_l$ are arbitrary, this result is  more general than the corresponding
result in \cite{LW} even for the point case.

To describe operators appeared in the decomposition of $\exp(W_{\omega})$, we need introduce some notations first.
Let
\begin{equation}\label{Omega(z)}
B^{\omega}(z) := \sum\limits_{l=1}^{\infty} -\omega_l z^{-2l+1} .
\end{equation}
The minus sign before $\omega_l $ in the above equation is added in order to keep consistent with the point case in \cite{LW}.
  Define $R_i({\omega})$  by
$$e^{B^{\omega}(z)} = \sum\limits_{i=0}^{\infty} R_i({\omega}) z^{-i}.$$
Then $R_0 ({\omega})= 1,$ and for $i\geq 1$
\begin{equation}\label{Omega_i}
R_i ({\omega}) = \sum\limits_{n=1}^{i} \frac{1}{n!} \sum_{\substack{l_1,\dots l_n\geq 1 \\ \sum_{j=1}^n 2l_j=i+n} } (\prod\limits_{j=1}^{n}-\omega_{l_j}) .
\end{equation}
Define  $$Q_{\mathbf{\omega}}(x,y) := \frac{1-\exp(B^{\omega}(x^{-1}) + B^{\omega}(y^{-1}) )}{x+y}.$$
Using Equation~\eqref{Omega(z)}, we can expand $Q_{\mathbf{\omega}}(x,y)$ as
\begin{eqnarray}
Q_{\mathbf{\omega}}(x,y) = && \sum\limits_{n=1}^{\infty} \frac{1}{n!} \sum\limits_{l_1,\cdots l_n \geq 1} (\prod\limits_{m=1}^{n} -\omega_{l_m}) \sum\limits_{\substack{ i,j\geq 0 \\ i+j = 2l_1-2 }} (-1)^{i+1}  \nonumber \\
&& \hspace{20pt} \cdot \sum\limits_{k=0}^{n-1} \binom{n-1}{k} x^{i+\sum_{m=2}^{k+1}(2l_m-1)} y^{j+\sum_{m=k+2}^{n}(2l_m-1)}. \label{Q^Omega}
\end{eqnarray}
The derivation of this formula is the same as expanding the series $ Q^B(x,y) $ in \cite{LW} (cf. computation at the end of the proof of Lemma 15 in \cite{LW}). Observe that $ Q^B $ is related to $Q_{\mathbf{\omega}}$ in the following way
\begin{equation} \label{QB}
  Q^B(x,y) =  Q_{\mathbf{\omega}}(x,y)\mid_{ \omega_l = -\frac{B_{2l}}{2l(2l-1)}  }.
\end{equation}

%\subsection{A linear homomorphism}
Define a linear map $\Theta: {\rm Pow}(x,y) \longrightarrow {\rm Diff}(\mathbf{t})$
 by
\begin{equation} \label{Theta}
 \Theta(x^iy^j) =  \sum\limits_{\mu,\nu}\eta^{\mu\nu} \frac{\partial}{\partial t^{\mu}_{i}} \frac{\partial}{\partial t^{\nu}_{j}}
\end{equation}
where ${\rm Pow}(x,y)$ is the space of formal power series in $x$ and $y$, and
$$ {\rm Diff}(\mathbf{t}):= \left\{ \left. \sum\limits_{i,j\geq 0} c_{ij} \sum\limits_{\mu,\nu}\eta^{\mu\nu} \frac{\partial}{\partial t^{\mu}_{i}} \frac{\partial}{\partial t^{\nu}_{j}} \right| c_{ij} \ {\rm are} \ {\rm constans} \right\}$$ is a subspace
of the space of second order differential operators in variables $t_n^{\alpha}$.

When restricted to the point case, the only choice of $\mu$ and $\nu$ is $0$, and $\Theta$ is reduced to $\Theta^{pt}$ where
\begin{equation} \label{Thetapt}
\Theta^{pt}(x^iy^j) = \frac{\partial}{\partial t_{i}} \frac{\partial}{\partial t_{j}} .
\end{equation}
Note that $\Theta^{pt}$ is the map denoted by $\Theta_2$ in \cite{LW}.

Write
$$W_{\omega} = \mathfrak{B}_{t,\omega} + W' + \frac{\hbar}{2}W'',$$
where
\begin{eqnarray}
\mathfrak{B}_{t,\omega} &=& - \sum\limits_{l=1}^{\infty} \omega_l \sum\limits_{i=0}^{\infty}\sum\limits_{\alpha=0}^N t^{\alpha}_i\frac{\partial}{\partial t^{\alpha}_{i+2l-1}} \quad \in \ \mathfrak{g}_1,
    \label{Btomega} \\
 W' &=& \sum\limits_{l=1}^{\infty} \omega_l \frac{\partial}{\partial t^0_{2l}} \quad \in \ \mathfrak{g}_2, \nonumber \\
 W'' &=& \sum\limits_{l=1}^{\infty} \omega_l \sum\limits_{\substack{ i,j\geq 0 \\ i+j = 2l-2 }} (-1)^{i} \sum_{\mu, \nu} \eta^{\mu\nu} \frac{\partial}{\partial t^{\mu}_{i}} \frac{\partial}{\partial t^{\nu}_{j}} \quad \in \ \mathfrak{g}_2'. \nonumber
\end{eqnarray}

\begin{proposition} \label{e^W=e^Be^Qe^P}
$$\exp(W_{\omega}) = \exp(\mathfrak{B}_{t,\omega}) \exp(\frac{\hbar}{2}Q^{W}_{t,\omega}) \exp(P_{t,\omega}),$$
where $P_{t,\omega} = -\sum\limits_{i=1}^{\infty} R_i({\omega}) \frac{\partial}{\partial t^0_{1+i}}$
 and $Q^{W}_{t,\omega} = \Theta(Q_{\omega}(x,y))$.
\end{proposition}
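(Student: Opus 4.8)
\emph{Outline of the argument.} The plan is to apply the Zassenhaus formula \eqref{Zassenhaus} to the decomposition $W_\omega = \mathfrak{B}_{t,\omega} + \big(W' + \tfrac{\hbar}{2}W''\big)$ recorded just before the statement. First I would note that $\mathfrak{g}_2 \oplus \mathfrak{g}_2'$ is an abelian Lie subalgebra: besides $[\mathfrak{g}_2,\mathfrak{g}_2]=0$ and $[\mathfrak{g}_2',\mathfrak{g}_2']=0$, one has $[\mathfrak{g}_2,\mathfrak{g}_2']=0$ because all constant coefficient differential operators commute; and it is normalized by $\mathfrak{g}_1$ since $[\mathfrak{g}_1,\mathfrak{g}_2]\subseteq\mathfrak{g}_2$ and $[\mathfrak{g}_1,\mathfrak{g}_2']\subseteq\mathfrak{g}_2'$. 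As $\mathfrak{B}_{t,\omega}\in\mathfrak{g}_1$ and $W' + \tfrac{\hbar}{2}W''\in\mathfrak{g}_2\oplus\mathfrak{g}_2'$, formula \eqref{Zassenhaus} with $X=\mathfrak{B}_{t,\omega}$ and $Y=W' + \tfrac{\hbar}{2}W''$ gives
\[
\exp(W_\omega) = \exp(\mathfrak{B}_{t,\omega})\, \exp\!\Big( \sum_{n \geq 1} \frac{(-1)^{n-1}}{n!}\, ad_{\mathfrak{B}_{t,\omega}}^{\,n-1}\big(W' + \tfrac{\hbar}{2}W''\big)\Big).
\]
Since $ad_{\mathfrak{B}_{t,\omega}}$ is linear and maps $\mathfrak{g}_2$ into $\mathfrak{g}_2$ and $\mathfrak{g}_2'$ into $\mathfrak{g}_2'$, the exponent equals $A' + \tfrac{\hbar}{2}A''$ with $A' := \sum_{n\geq1}\frac{(-1)^{n-1}}{n!}ad_{\mathfrak{B}_{t,\omega}}^{\,n-1}(W')\in\mathfrak{g}_2$ and $A'' := \sum_{n\geq1}\frac{(-1)^{n-1}}{n!}ad_{\mathfrak{B}_{t,\omega}}^{\,n-1}(W'')\in\mathfrak{g}_2'$; as $[A',A'']=0$, the last exponential factors as $\exp(\tfrac{\hbar}{2}A'')\exp(A')$, the order being immaterial. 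It then remains to identify $A' = P_{t,\omega}$ and $A'' = Q^W_{t,\omega}$.

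For both identities the basic input is the commutation rule $ad_{\mathfrak{B}_{t,\omega}}(\partial/\partial t^\mu_k) = \sum_{l\geq1}\omega_l\,\partial/\partial t^\mu_{k+2l-1}$, valid for every index $\mu$ and immediate from \eqref{Btomega}. Encode the span of $\{\partial/\partial t^0_k : k\geq 0\}$ (which $ad_{\mathfrak{B}_{t,\omega}}$ preserves) into one-variable power series via $\partial/\partial t^0_k \leftrightarrow x^k$; under this bookkeeping $ad_{\mathfrak{B}_{t,\omega}}$ becomes multiplication by $m(x) := \sum_{l\geq1}\omega_l x^{2l-1} = -B^\omega(x^{-1})$. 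As $m(x)$ has positive order, multiplication by $m(x)^{n-1}$ raises degree, so $\sum_{n\geq1}\frac{(-1)^{n-1}}{n!}\,ad_{\mathfrak{B}_{t,\omega}}^{\,n-1} = \frac{1 - e^{-ad_{\mathfrak{B}_{t,\omega}}}}{ad_{\mathfrak{B}_{t,\omega}}}$ is a well-defined formal operator. Since $W'$ corresponds to $\sum_{l\geq1}\omega_l x^{2l} = x\, m(x)$, the element $A'$ corresponds to
\[
\frac{1 - e^{-m(x)}}{m(x)}\cdot x\, m(x) = x\big(1 - e^{B^\omega(x^{-1})}\big) = x\Big(1 - \sum_{i\geq0}R_i(\omega)\,x^{i}\Big) = -\sum_{i\geq1}R_i(\omega)\,x^{i+1},
\]
using $R_0(\omega)=1$ and the definition $e^{B^\omega(z)} = \sum_i R_i(\omega)z^{-i}$; translating back, $A' = -\sum_{i\geq1}R_i(\omega)\,\partial/\partial t^0_{1+i} = P_{t,\omega}$.

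For $A''$ the same commutation rule shows $ad_{\mathfrak{B}_{t,\omega}}\circ\Theta = \Theta\circ M$ on ${\rm Pow}(x,y)$, where $M$ is multiplication by $m(x)+m(y)$, which again has positive total order. Using $\sum_{i+j=2l-2}(-1)^i x^i y^j = \frac{x^{2l-1}+y^{2l-1}}{x+y}$ we may write $W'' = \Theta\big(\tfrac{m(x)+m(y)}{x+y}\big)$, so $A''$ is $\Theta$ applied to
\[
\frac{1 - e^{-(m(x)+m(y))}}{m(x)+m(y)}\cdot\frac{m(x)+m(y)}{x+y} = \frac{1 - \exp\!\big(B^\omega(x^{-1}) + B^\omega(y^{-1})\big)}{x+y} = Q_{\omega}(x,y),
\]
that is, $A'' = \Theta(Q_{\omega}(x,y)) = Q^W_{t,\omega}$. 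Assembling the pieces yields $\exp(W_\omega) = \exp(\mathfrak{B}_{t,\omega})\exp(\tfrac{\hbar}{2}Q^W_{t,\omega})\exp(P_{t,\omega})$.

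The step I expect to require the most care is justifying the bookkeeping: that $\partial/\partial t^0_k \leftrightarrow x^k$ and the map $\Theta$ genuinely intertwine $ad_{\mathfrak{B}_{t,\omega}}$ with the stated multiplication operators on power series, and that the positive-order property of $m(x)$ and of $m(x)+m(y)$ makes all the resulting infinite operator series well defined. Once this framework is set up, the rest is a formal generating-function computation closely parallel to the proofs of Lemmas 14 and 15 in \cite{LW}.
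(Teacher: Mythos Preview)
Your proof is correct and reaches the same conclusion as the paper, but the route is organized differently in two respects. First, where the paper applies the Zassenhaus formula \eqref{Zassenhaus} twice---once to $(\mathfrak{B}_{t,\omega}+\tfrac{\hbar}{2}W'')+W'$ and then again to $\mathfrak{B}_{t,\omega}+\tfrac{\hbar}{2}W''$---you apply it a single time with $Y=W'+\tfrac{\hbar}{2}W''$ and afterwards split the resulting exponential using $[A',A'']=0$; both routes yield the same bracket formulas $P_{t,\omega}=\sum_{n\geq1}\frac{(-1)^{n-1}}{n!}\,ad_{\mathfrak{B}_{t,\omega}}^{\,n-1}W'$ and $Q^W_{t,\omega}=\sum_{n\geq1}\frac{(-1)^{n-1}}{n!}\,ad_{\mathfrak{B}_{t,\omega}}^{\,n-1}W''$. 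The substantive difference is in how those sums are evaluated. The paper computes each iterated bracket $ad_{\mathfrak{B}_{t,\omega}}^{\,n-1}W'$ and $ad_{\mathfrak{B}_{t,\omega}}^{\,n-1}W''$ explicitly by direct inspection, sums the resulting combinatorial expressions, and then matches them against \eqref{Omega_i} and the pre-computed expansion \eqref{Q^Omega} of $Q_\omega(x,y)$. Your generating-function bookkeeping---translating $ad_{\mathfrak{B}_{t,\omega}}$ into multiplication by $m(x)=-B^\omega(x^{-1})$ on the $\mathfrak{g}_2$ side (respectively by $m(x)+m(y)$ through $\Theta$ on the $\mathfrak{g}_2'$ side) and summing the operator series as $\tfrac{1-e^{-m}}{m}$---lands directly on the defining closed forms of $P_{t,\omega}$ and $Q_\omega(x,y)$ without ever unpacking \eqref{Q^Omega}. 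Your argument is more streamlined and conceptual; the paper's is more hands-on and has the side benefit of independently confirming the expansion \eqref{Q^Omega}.
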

\begin{proof}
Note that $W'$ commutes with $W''$. As in \cite{LW}, using Zassenhaus formula, i.e. Equation~\eqref{Zassenhaus}, to decompose
$\exp(W_{\omega}) = \exp((\mathfrak{B}_{t,\omega} + \frac{\hbar}{2} W'') + W')$ first
and then decompose $\exp(\mathfrak{B}_{t,\omega} + \frac{\hbar}{2} W'')$, we have
$$\exp(W_{\omega}) = \exp(\mathfrak{B}_{t,\omega}) \exp(\frac{\hbar}{2}Q^{W}_{t,\omega}) \exp(P_{t,\omega}),$$
where $$P_{t,\omega} = \sum\limits_{n=1}^{\infty} \frac{(-1)^{n-1}}{n!} ad_{\mathfrak{B}_{t,\omega}}^{n-1} W',$$
and
\begin{equation} \label{QWtbracket}
Q^{W}_{t,\omega} = \sum\limits_{n=1}^{\infty} \frac{(-1)^{n-1}}{n!} ad_{\mathfrak{B}_{t,\omega}}^{n-1} W''.
\end{equation}
Since $$ad_{\mathfrak{B}_{t,\omega}} W' = \sum\limits_{l_1,l_2 \geq 1} \omega_{l_1}\omega_{l_2} \frac{\partial}{\partial t^0_{2l_1+2l_2-1}},$$
comparing coefficients and indices of $W'$ and $ad_{\mathfrak{B}_{t,\omega}} W'$, we can deduce that
the iterated bracket $ad_{\mathfrak{B}_{t,\omega}}^{n-1} W'$ has the form
$$ ad_{\mathfrak{B}_{t,\omega}}^{n-1} W' =  \sum\limits_{l_1,\cdots,l_n \geq 1} (\prod\limits_{j=1}^{n}\omega_{l_j}) \frac{\partial}{\partial t^0_{1+\sum_{j=1}^{n} (2l_j-1)}} .$$
Hence
\begin{eqnarray*}
P_{t,\omega} &=& \sum\limits_{n=1}^{\infty} \frac{(-1)^{n-1}}{n!} \sum\limits_{i=n}^{\infty} \sum_{\substack{l_1,\dots l_n\geq 1 \\ \sum_{j=1}^n 2l_j=i+n} } (\prod\limits_{j=1}^{n}\omega_{l_j}) \frac{\partial}{\partial t^0_{1+i}} \\
&=& -\sum\limits_{i=1}^{\infty} \{ \sum\limits_{n=1}^{i} \frac{1}{n!} \sum_{\substack{l_1,\dots l_n\geq 1 \\ \sum_{j=1}^n 2l_j=i+n} } (\prod\limits_{j=1}^{n}-\omega_{l_j}) \} \frac{\partial}{\partial t^0_{1+i}} \\
&=& -\sum\limits_{i=1}^{\infty} R_i({\omega}) \frac{\partial}{\partial t^0_{1+i}}
\end{eqnarray*}
where the last equality is due to Equation~\eqref{Omega_i}.

Similarly, since
$$ ad_{\mathfrak{B}_{t,\omega}} W''  = \sum\limits_{l_1,l_2 \geq 1} \omega_{l_1}\omega_{l_2} \sum\limits_{\substack{ i,j\geq 0 \\ i+j = 2l_1-2 }} (-1)^{i} \sum_{\mu, \nu} \eta^{\mu\nu} ( \frac{\partial}{\partial t^{\mu}_{i+2l_2-1}} \frac{\partial}{\partial t^{\nu}_{j}} + \frac{\partial}{\partial t^{\mu}_{i}} \frac{\partial}{\partial t^{\nu}_{j+2l_2-1}} ),$$
we have
\begin{eqnarray*}
ad_{\mathfrak{B}_{t,\omega}}^{n-1} W'' = && \sum\limits_{l_1,\cdots,l_n \geq 1} (\prod\limits_{m=1}^{n} \omega_{l_m}) \sum\limits_{\substack{ i,j\geq 0 \\ i+j = 2l_1-2 }} (-1)^{i} \sum\limits_{k=0}^{n-1} \binom{n-1}{k}   \\
&& \hspace{60pt} \cdot  \sum_{\mu, \nu} \eta^{\mu\nu} \frac{\partial}{\partial t^{\mu}_{i+\sum_{m=2}^{k+1}(2l_m-1)}} \frac{\partial}{\partial t^{\nu}_{j+\sum_{m=k+2}^{n} (2l_m-1) }}.
\end{eqnarray*}
Combining with equations~\eqref{QWtbracket} and \eqref{Q^Omega}, we have
$$ Q^{W}_{t,\omega} = \Theta(Q_{\omega}(x,y)) .$$
\end{proof}

Since $P_{t,\omega}$ commutes with $Q^{W}_{t,\omega}$, by Proposition \ref{e^W=e^Be^Qe^P} and Equation~\eqref{Z=e^WZ}, we have
\begin{equation}\label{ZH=e^Be^Pe^QZ}
Z_{\mathbb{E}}^X(\mathbf{t},\mathbf{s}) = \exp(\mathfrak{B}_{t,\omega}) \exp(P_{t,\omega}) \exp(\frac{\hbar}{2}Q^{W}_{t,\omega})\cdot Z^X
\end{equation}
where $\omega_l = \frac{B_{2l}}{(2l)!} s_{2l-1}$ for all $l \geq 1$.
We observe that the operator $\exp(\frac{\hbar}{2}Q^{W}_{t,\omega})$
coincides with Givental's quantization of the scalar operator $\exp(B^{\omega}(z^{-1}))$ (cf. Proposition 7.3 in \cite{Gi2}).
It is interesting to see that Givental's quantization arises naturally in the process of decomposing differential operators
using Zassenhaus formula. To understand the action of the operator $\exp(\mathfrak{B}_{t,\omega}) \exp(P_{t,\omega})$, we notice that
$\mathfrak{B}_{t,\omega}$ and $P_{t,\omega}$ are both first order differential operators. Therefore the action of $\exp(\mathfrak{B}_{t,\omega}) \exp(P_{t,\omega})$ behaves as changing of variables and we only need to
understand its action on coordinates $t_n^{\alpha}$.
\begin{lemma} \label{lem:e^Be^Pt}
\begin{equation}\label{e^Be^Pt}
\exp(\mathfrak{B}_{t,\omega})\exp(P_{t,\omega}) \cdot t_n^{\alpha} =
\left \{ \begin{array}{ll} \sum\limits_{i=0}^{n} R_i({\omega}) t^{0}_{n-i}-R_{n-1}({\omega}), & \alpha = 0 \ and \ n\geq 2, \\  \sum\limits_{i=0}^{n}R_i({\omega}) t^{\alpha}_{n-i}, & otherwise. \end{array} \right.
\end{equation}
\end{lemma}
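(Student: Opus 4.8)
The key observation is that $\mathfrak{B}_{t,\omega}$ and $P_{t,\omega}$ are both first order differential operators, hence derivations of the polynomial algebra in the variables $t_n^\alpha$; consequently $\exp(\mathfrak{B}_{t,\omega})$ and $\exp(P_{t,\omega})$ are algebra homomorphisms (wherever defined), so it suffices to determine their effect on each coordinate $t_n^\alpha$. Reading the left hand side of \eqref{e^Be^Pt} as the composition $\exp(\mathfrak{B}_{t,\omega})\bigl(\exp(P_{t,\omega})\cdot t_n^\alpha\bigr)$, I would first compute $\exp(P_{t,\omega})\cdot t_n^\alpha$ and then apply $\exp(\mathfrak{B}_{t,\omega})$ to the result.

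For the first step, $P_{t,\omega}=-\sum_{i\geq 1}R_i(\omega)\,\partial/\partial t^0_{1+i}$ has constant coefficients, so $P_{t,\omega}\cdot t_n^\alpha$ is a constant and $P_{t,\omega}^2\cdot t_n^\alpha=0$. Hence $\exp(P_{t,\omega})\cdot t_n^\alpha=t_n^\alpha+P_{t,\omega}\cdot t_n^\alpha$, which equals $t_n^0-R_{n-1}(\omega)$ when $\alpha=0$ and $n\geq 2$ (so that the index $1+i=n$ occurs with $i=n-1\geq 1$), and equals $t_n^\alpha$ otherwise.

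For the second step, from \eqref{Btomega} one reads off $\mathfrak{B}_{t,\omega}\cdot t_n^\alpha=-\sum_{l\geq 1}\omega_l\,t^\alpha_{n-2l+1}$, with the convention $t^\alpha_m=0$ for $m<0$. For fixed $\alpha$ introduce the generating series $\mathcal{T}^\alpha(z):=\sum_{n\geq 0}t_n^\alpha z^{-n}$; the displayed formula then says exactly that $\mathfrak{B}_{t,\omega}\cdot\mathcal{T}^\alpha(z)=B^\omega(z)\,\mathcal{T}^\alpha(z)$, with $B^\omega(z)$ as in \eqref{Omega(z)}. Since for each fixed $n$ only finitely many powers of $\mathfrak{B}_{t,\omega}$ act nontrivially on $t_n^\alpha$ (each application strictly raises the minimal power of $z^{-1}$), applying $\exp$ coefficientwise gives $\exp(\mathfrak{B}_{t,\omega})\cdot\mathcal{T}^\alpha(z)=e^{B^\omega(z)}\mathcal{T}^\alpha(z)=\bigl(\sum_{i\geq 0}R_i(\omega)z^{-i}\bigr)\mathcal{T}^\alpha(z)$; extracting the coefficient of $z^{-n}$ yields $\exp(\mathfrak{B}_{t,\omega})\cdot t_n^\alpha=\sum_{i=0}^{n}R_i(\omega)\,t^\alpha_{n-i}$.

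Finally I would combine the two steps. When $\alpha\neq 0$, or $\alpha=0$ with $n\leq 1$, the first step is trivial and the second gives $\sum_{i=0}^{n}R_i(\omega)t^\alpha_{n-i}$, the ``otherwise'' case of \eqref{e^Be^Pt}. When $\alpha=0$ and $n\geq 2$, the first step produces $t_n^0-R_{n-1}(\omega)$; since $\mathfrak{B}_{t,\omega}$ annihilates constants, applying $\exp(\mathfrak{B}_{t,\omega})$ and using linearity gives $\sum_{i=0}^{n}R_i(\omega)t^0_{n-i}-R_{n-1}(\omega)$, as claimed. The only point that requires any care — and it is a mild one — is the justification that $\exp(\mathfrak{B}_{t,\omega})$ really acts by multiplication by $e^{B^\omega(z)}$ on $\mathcal{T}^\alpha(z)$, i.e. that no convergence issue arises; this is exactly where one uses that $\mathfrak{B}_{t,\omega}$ lowers indices, so that every coefficient $t_n^\alpha$ is affected by only finitely many terms of the exponential series.
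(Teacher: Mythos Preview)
Your proposal is correct and follows essentially the same approach as the paper: first compute the shift $\exp(P_{t,\omega})\cdot t_n^\alpha$, then compute $\exp(\mathfrak{B}_{t,\omega})\cdot t_n^\alpha$, and combine. The only cosmetic difference is that in the second step the paper expands $\exp(\mathfrak{B}_{t,\omega})$ as a Taylor series and identifies the resulting coefficients directly with the explicit formula \eqref{Omega_i} for $R_i(\omega)$, whereas you package the same computation via the generating series $\mathcal{T}^\alpha(z)$ on which $\mathfrak{B}_{t,\omega}$ acts as multiplication by $B^\omega(z)$; both routes land on $\sum_{i=0}^n R_i(\omega)\,t^\alpha_{n-i}$.
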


\begin{proof}
Noticed that $\exp(P_{t,\omega})$ is a shift operator. For any $t_n^{\alpha}$,
\begin{equation}\label{e^P.t}
 \exp(P_{t,\omega}) \cdot t_n^{\alpha} =
\left \{ \begin{array}{cl} t_n^{0}-R_{n-1}({\omega}), & \alpha = 0 \ and \ n\geq 2, \\  t_n^{\alpha}, & otherwise. \end{array} \right.
\end{equation}
We will set $t_{n}^{\alpha}=0$ if $n <0$. Then
 $ \mathfrak{B}_{t,\omega} \cdot t_n^{\alpha} = \sum\limits_{l\geq 1} (-\omega_l) t^{\alpha}_{n-(2l-1)} $.
Using the Taylor expansion for $\exp(\mathfrak{B}_{t,\omega})$, we have
\begin{eqnarray*}
\exp(\mathfrak{B}_{t,\omega}) \cdot t_n^{\alpha}  &=& t_n^{\alpha} + \sum\limits_{m\geq 1}\frac{1}{m!} \sum\limits_{l_1,\cdots l_m\geq 1} (\prod\limits_{j=1}^{m}-\omega_{l_j}) t^{\alpha}_{n-\sum_j(2l_j-1)} \\
&=& t_n^{\alpha} + \sum\limits_{i=1}^{n} \left\{ \sum\limits_{m=1}^{i} \frac{1}{m!} \sum_{\substack{l_1,\dots l_m\geq 1 \\ \sum_{j=1}^m 2l_j=i+m} } (\prod\limits_{j=1}^{m}-\omega_{l_j}) \right\} t^{\alpha}_{n-i}.
\end{eqnarray*}
Comparing with Equation~\eqref{Omega_i}, we have
\begin{equation}\label{e^B.t}
\exp(\mathfrak{B}_{t,\omega}) \cdot t_n^{\alpha} = \sum\limits_{i=0}^{n}R_i({\omega}) t^{\alpha}_{n-i},
\end{equation}
The lemma then follows from equations \eqref{e^P.t} and \eqref{e^B.t}.
\end{proof}

Set
\begin{equation} \label{hatt}
\hat{t}_n^\alpha := \exp(\mathfrak{B}_{t,\omega})\exp(P_{t,\omega}) \cdot t_n^{\alpha}
\end{equation}
for all $n$ and $\alpha$.
This defines $\hat{\mathbf{t}}=(\hat{t}_n^\alpha)$ as a function of $\mathbf{t}=(t_n^\alpha)$ and $\omega$.
Let $Q^W_{\hat{t},\omega}$ be the operator obtained from $Q_{t,\omega}^W$ after replacing $t_n^{\alpha}$ by
 $\hat{t}_n^{\alpha}$.
Then Equation~\eqref{ZH=e^Be^Pe^QZ} can be rewritten in the following form
\begin{corollary} \label{cor}
\[Z^{X}_{\mathbb{E}}(\mathbf{t},\mathbf{s}) = [e^{\frac{\hbar}{2} Q^W_{\hat{t},\omega}} \cdot Z^{X}(\hat{\mathbf{t}}) ]_{\hat{\mathbf{t}} = \hat{\mathbf{t}}(\mathbf{t},\omega)},\]
 where $\omega_l = \frac{B_{2l}}{(2l)!} s_{2l-1}$ for all $l \geq 1$.
\end{corollary}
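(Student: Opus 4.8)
The plan is to read the corollary as a direct reformulation of Equation~\eqref{ZH=e^Be^Pe^QZ}, which has already been obtained by combining Proposition~\ref{e^W=e^Be^Qe^P}, Equation~\eqref{Z=e^WZ}, and the commutativity of $P_{t,\omega}$ with $Q^W_{t,\omega}$, and then to carry out the remaining work of interpreting the composite operator $\exp(\mathfrak{B}_{t,\omega})\exp(P_{t,\omega})$ as a change of variables acting on the formal power series $\exp(\frac{\hbar}{2}Q^W_{t,\omega})\cdot Z^X(\mathbf{t})$. Throughout, $\omega_l=\frac{B_{2l}}{(2l)!}s_{2l-1}$ as in \eqref{ZH=e^Be^Pe^QZ}.

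First I would make precise the assertion that $\exp(\mathfrak{B}_{t,\omega})\exp(P_{t,\omega})$ ``behaves as changing of variables''. Since $\mathfrak{B}_{t,\omega}$ and $P_{t,\omega}$ are derivations of the ring of formal power series in $\mathbf{t}$, each of $\exp(\mathfrak{B}_{t,\omega})$ and $\exp(P_{t,\omega})$ is a ring automorphism; one checks that the exponentials are well defined because a single application of $\mathfrak{B}_{t,\omega}$ or $P_{t,\omega}$ strictly decreases the sum of the subscripts appearing in any monomial (here one sets $t^\alpha_n=0$ for $n<0$, as in the proof of Lemma~\ref{lem:e^Be^Pt}), so the exponential series is locally finite. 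A ring homomorphism commutes with substitution, hence for every formal power series $g(\mathbf{t})$ one has $\exp(\mathfrak{B}_{t,\omega})\exp(P_{t,\omega})\cdot g(\mathbf{t})=g(\hat{\mathbf{t}})$, where $\hat{t}^\alpha_n=\exp(\mathfrak{B}_{t,\omega})\exp(P_{t,\omega})\cdot t^\alpha_n$ is the series defined in \eqref{hatt} and computed explicitly in Lemma~\ref{lem:e^Be^Pt}.

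Next I would apply this with $g(\mathbf{t}):=\exp(\frac{\hbar}{2}Q^W_{t,\omega})\cdot Z^X(\mathbf{t})$, which is a well-defined power series in $\mathbf{t}$ because $Q^W_{t,\omega}$ has constant coefficients. Equation~\eqref{ZH=e^Be^Pe^QZ} then reads $Z^X_{\mathbb{E}}(\mathbf{t},\mathbf{s})=g(\hat{\mathbf{t}})$, and it only remains to unwind $g(\hat{\mathbf{t}})$ so as to recover the displayed formula: by construction $g$ is produced by carrying out the differentiations of $\frac{\hbar}{2}Q^W_{t,\omega}$ in the variables $t^\alpha_n$ and then leaving the result as a series in those variables, so evaluating at $\hat{\mathbf{t}}$ amounts to substituting $t^\alpha_n\mapsto\hat{t}^\alpha_n(\mathbf{t},\omega)$ after the differentiations have been performed. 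Relabeling the variables acted on by $Q^W$ as $\hat{t}^\alpha_n$ turns the constant-coefficient operator into $Q^W_{\hat{t},\omega}$ in the sense fixed just before the corollary, which gives exactly $Z^X_{\mathbb{E}}(\mathbf{t},\mathbf{s})=[e^{\frac{\hbar}{2}Q^W_{\hat{t},\omega}}\cdot Z^X(\hat{\mathbf{t}})]_{\hat{\mathbf{t}}=\hat{\mathbf{t}}(\mathbf{t},\omega)}$.

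The one thing that genuinely needs care — and the reason the statement is phrased with the bracket-and-substitution notation rather than as a naive interchange of operators — is the order of operations: the second-order derivatives of $Q^W$ must act before, not after, the change of variables $\mathbf{t}\mapsto\hat{\mathbf{t}}$, since $\hat{\mathbf{t}}$ is itself a nontrivial series in $\mathbf{t}$, so a direct replacement $\partial/\partial t\mapsto\partial/\partial\hat{t}$ inside the exponentiated operator would be wrong. Keeping these two roles of the variables separate is the only bookkeeping subtlety; there is no analytic content, as all the identities are between formal power series and all the exponentials that occur are locally finite.
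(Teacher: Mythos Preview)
Your proposal is correct and follows the same approach as the paper. The paper itself does not give a formal proof of the corollary; it simply says that Equation~\eqref{ZH=e^Be^Pe^QZ} ``can be rewritten in the following form'' after noting (just before Lemma~\ref{lem:e^Be^Pt}) that $\exp(\mathfrak{B}_{t,\omega})\exp(P_{t,\omega})$, being the exponential of first-order operators, acts as a change of variables determined by its effect on the coordinates $t_n^\alpha$. Your write-up supplies the details the paper leaves implicit---the ring-automorphism interpretation, the local finiteness of the exponentials, and the bookkeeping distinction between differentiating in the dummy variables and then substituting $\hat{\mathbf{t}}(\mathbf{t},\omega)$---but the underlying argument is identical.
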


This corollary can be regarded as a generalization of a lemma of Givental (cf. the lemma in Section~2.3 in \cite{Gi1}). To see this, we need the following
property:
\begin{lemma}
The two sets of variables $\mathbf{t}=(t_n^\alpha)$ and $\hat{\mathbf{t}}=(\hat{t}_n^\alpha)$ satisfy the following identity in  $H^{*}(X)[[z]]$:
\begin{equation}\label{hatT=T}
z \gamma_0 +\sum\limits_{n \geq 0} (-1)^n  z^n \sum\limits_{\alpha} \hat{t}_n^{\alpha}\gamma_{\alpha}
= \left(z \gamma_0 +\sum\limits_{n \geq 0} (-1)^n z^n \sum\limits_{\alpha} t_n^{\alpha}\gamma_{\alpha}\right)\exp(-B^{\omega}(z^{-1})) .
\end{equation}
\end{lemma}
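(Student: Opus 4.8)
The plan is to unravel the definition \eqref{hatt} of $\hat t_n^\alpha$ and translate the coefficient formulas from Lemma~\ref{lem:e^Be^Pt} into the generating function identity \eqref{hatT=T}. First I would package the coordinates into a single cohomology-valued power series: write $T(z) := z\gamma_0 + \sum_{n\geq 0}(-1)^n z^n \sum_\alpha t_n^\alpha \gamma_\alpha$ and similarly $\hat T(z)$ with $\hat t_n^\alpha$ in place of $t_n^\alpha$. The goal is then precisely $\hat T(z) = T(z)\exp(-B^\omega(z^{-1}))$. Since $\exp(-B^\omega(z^{-1})) = \left(\sum_{i\geq 0} R_i(\omega) z^{-i}\right)^{-1}$ is not quite what appears in Lemma~\ref{lem:e^Be^Pt}, I would instead first prove the equivalent statement $T(z) = \hat T(z)\exp(B^\omega(z^{-1})) = \hat T(z)\sum_{i\geq 0} R_i(\omega) z^{-i}$, extract the coefficient of $(-1)^n z^n \gamma_\alpha$ on both sides, and check it matches \eqref{e^Be^Pt}; equivalence of the two forms follows since $e^{B^\omega}$ is invertible as a formal series in $z^{-1}$.

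The key computational step is the coefficient comparison. On the right-hand side $\hat T(z)\sum_{i\geq 0}R_i(\omega)z^{-i}$, the coefficient of $z^n$ (for $n\geq 1$) gets contributions from the $z^{n+i}$-term of $\hat T(z)$ multiplied by $R_i(\omega)z^{-i}$, together with the $z^1\gamma_0$ term of $\hat T$ contributing to degree... — here I must be careful with the split between the distinguished $z\gamma_0$ summand and the $n\geq 0$ sum, since $\hat t$ and $t$ both have an "$n=1$, $\alpha=0$" entry in addition to the $z\gamma_0$. The cleanest bookkeeping is to note that $z\gamma_0 + \sum_{n\geq 0}(-1)^n z^n\sum_\alpha t_n^\alpha\gamma_\alpha$ has, in the $\gamma_0$-component, the coefficient $z + \sum_{n\geq 0}(-1)^n z^n t_n^0$, so the $z^1$-coefficient there is $1 - t_1^0$ while the $z^1$-coefficient of the $\gamma_\alpha$-component ($\alpha\neq 0$) is $-t_1^\alpha$. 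Then matching $z^n$-coefficients of $T = \hat T \cdot e^{B^\omega(z^{-1})}$ reduces, after dividing out the sign $(-1)^n$ and using that $e^{B^\omega}$ involves only even powers $z^{-2l+1}$... — actually the signs here require attention because $B^\omega(z^{-1}) = \sum_l -\omega_l z^{2l-1}$ contributes odd powers, so $R_i(\omega) z^{-i}$ pairs $z^{n}$ of $T$ with $z^{n+i}$ of $\hat T$ and the sign $(-1)^{n+i}$ versus $(-1)^n$ differs by $(-1)^i$. Tracking this, the identity to verify becomes $t_n^\alpha = \sum_{i=0}^n (-1)^i R_i(\omega)\,\hat t_{n-i}^\alpha$ (with the $\gamma_0$, $n=1$ boundary handled by the extra $z\gamma_0$), which I would then check is the inverse relation to \eqref{e^Be^Pt}: since $\sum_i R_i z^{-i}$ and $\sum_i (-1)^i R_i z^{-i}$... — rather, the simplest route is to observe that Lemma~\ref{lem:e^Be^Pt} already gives $\hat t_n^\alpha$ as an explicit linear combination of $t_{n-i}^\alpha$ with coefficients $R_i(\omega)$ (plus the shift $-R_{n-1}(\omega)$ for $\alpha=0$), so I would substitute that directly into $\hat T(z)$ and sum.

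So the main body of the argument is: substitute \eqref{e^Be^Pt} into $\hat T(z)$, interchange the order of summation, and recognize $\sum_n (-1)^n z^n \sum_{i} R_i(\omega) t_{n-i}^\alpha = \big(\sum_i (-1)^i R_i(\omega) z^i\big)\big(\sum_m (-1)^m z^m t_m^\alpha\big)$. Here is the crux: $\sum_{i\geq 0}(-1)^i R_i(\omega) z^i$ must equal $\exp(-B^\omega(z^{-1}))$ evaluated appropriately — but $\exp(-B^\omega(z^{-1})) = \sum_i R_i(\omega)(-z^{-1})^{?}$... Let me restate precisely what I expect: from $e^{B^\omega(z)} = \sum_i R_i(\omega) z^{-i}$ with $B^\omega(z) = \sum_l -\omega_l z^{-2l+1}$, replacing $z \mapsto z^{-1}$ gives $e^{B^\omega(z^{-1})} = \sum_i R_i(\omega) z^{i}$, and since $B^\omega$ is odd, $e^{-B^\omega(z^{-1})} = e^{B^\omega(-z^{-1})}\big|_{\text{sign}} = \sum_i (-1)^i R_i(\omega) z^i$. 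Thus $\sum_i(-1)^i R_i(\omega) z^i = e^{-B^\omega(z^{-1})}$, and the generating-function sum over the $\gamma_\alpha$ components ($\alpha \neq 0$, and the non-constant part for $\alpha=0$) collapses exactly to $T(z)\exp(-B^\omega(z^{-1}))$. The only remaining point is the constant-in-$t$ terms: the $z\gamma_0$ on the left of \eqref{hatT=T} must match $z\gamma_0 \cdot e^{-B^\omega(z^{-1})}$ minus the contribution of the shift terms $-R_{n-1}(\omega)$ in the $\alpha=0$, $n\geq 2$ entries; I expect these to cancel precisely because $z\cdot e^{-B^\omega(z^{-1})} = z\sum_i(-1)^i R_i(\omega) z^i = \sum_{n\geq 1}(-1)^{n-1}R_{n-1}(\omega)z^n$, whose $z^n$-coefficient is exactly the shift $-(-1)^n R_{n-1}(\omega)$ needed to absorb the inhomogeneous term from \eqref{e^Be^Pt}. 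I expect the main obstacle to be purely notational: keeping the three sign conventions — the $(-1)^n$ in $T(z)$, the sign in $B^\omega$, and the $(-1)^i$ from substituting $z\mapsto z^{-1}$ in an odd series — consistent throughout, together with correctly isolating the distinguished $z\gamma_0$ term so that the $\alpha=0$ shift $-R_{n-1}(\omega)$ lands in the right place. No deep input is needed beyond Lemma~\ref{lem:e^Be^Pt} and the definitions \eqref{Omega(z)}--\eqref{Omega_i}.
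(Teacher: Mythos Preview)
Your proposal is correct and follows essentially the same approach as the paper. Both arguments hinge on the identity $\exp(-B^\omega(z^{-1})) = \sum_{i\geq 0}(-1)^i R_i(\omega) z^i$ (obtained via the oddness of $B^\omega$), followed by a coefficient comparison that reduces \eqref{hatT=T} to the explicit formulas of Lemma~\ref{lem:e^Be^Pt}; the paper expands the right-hand side and reads off the coefficients, while you substitute \eqref{e^Be^Pt} into the left-hand side and resum, but the content is identical, including the observation that $z\gamma_0 \cdot e^{-B^\omega(z^{-1})} = \sum_{n\geq 1}(-1)^{n-1}R_{n-1}(\omega)z^n$ accounts exactly for the inhomogeneous shift $-R_{n-1}(\omega)$ in the $\alpha=0$, $n\geq 2$ case.
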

\begin{proof}
Since
\[ \exp(-B^{\omega}(z^{-1})) = \exp(B^{\omega}((-z)^{-1})) = \sum_{n=0}^{\infty} R_n(\omega) (-z)^n,
\]
the right hand side of Equation~\eqref{hatT=T} has the following expansion
\[
 \gamma_0 \sum_{n=1}^{\infty} (-1)^{n-1} R_{n-1}(\omega) z^n +
  \sum\limits_{n\geq 0} \sum\limits_{i=0}^{n}(-1)^n R_{i} (\omega) \sum\limits_{\alpha}  t_{n-i}^{\alpha} \gamma_{\alpha} z^n .
\]
Note that $R_0(\omega)=1$. Since $\{ \gamma_{\alpha} \mid \alpha=0, 1, \ldots, N\}$ is a basis of $H^*(X)$,
comparing coefficients with the left hand side of Equation~\eqref{hatT=T}, we can see that Equation~\eqref{hatT=T}
is equivalent to
\begin{eqnarray*}
\widehat{t}^{\alpha}_0 &=& t^{\alpha}_0 , \\
\widehat{t}^{\alpha}_1 &=& R_1(\omega) t^{\alpha}_0+t^{\alpha}_1 , \\
\widehat{t}_n^{0} &=& \sum\limits_{i=0}^{n} R_i (\omega) t_{n-i}^0- R_{n-1}(\omega) \quad {\rm for \,\,\, } n\geq 2,\\
\widehat{t}_n^{\alpha} &=& \sum\limits_{i=0}^{n} R_{i}(\omega) t_{n-i}^{\alpha} \quad {\rm for \,\,\, } n\geq 2 \,\,\,
                   {\rm and \,\,\, }\alpha \neq 0.
\end{eqnarray*}
Therefore this lemma is equivalent to Lemma \ref{lem:e^Be^Pt}.
\end{proof}

{\bf Remark}:
In \cite{Gi1}, Givental considered the case when $X$ is a point and defined the change of
variables
$\mathbf{t}=(t_n) \longrightarrow \hat{\mathbf{t}} = (\hat{t}_{n})$
using Equation~\eqref{hatT=T}. Hence the lemma in Section 2.3 of \cite{Gi1} is the same
as the special case of Corollary~\ref{cor} for $X$ being a point.  Givental established this lemma by
arguing that the right hand side of the equation in Corollary~\ref{cor} also satisfies
the system of differential equations \eqref{DZ=0} in this special case. Our method in proving
Corollary~\ref{cor} is completely different from Givental's approach.

\vspace{10pt}

When dealing with linear Hodge integrals, we need perform the substitution
\begin{equation} \label{omega->u}
 \omega_l = -\frac{B_{2l}}{2l(2l-1)} u^{2(2l-1)}
 \end{equation}
for all $l \geq 1$. Under this substitution, the operator $W_{\omega}$ defined by equation \eqref{denW} becomes the operator $W_u$ defined by equation
\eqref{eqn:Wu}.
Let $\mathfrak{B}_{t,u}$, $Q^{W}_{t,u}$, $P_{t,u}$ be operators obtained from
$\mathfrak{B}_{t,\omega}, Q^{W}_{t,\omega}, P_{t,\omega}$   respectively after the substitution given in equation \eqref{omega->u} .
In this case, Proposition \ref{e^W=e^Be^Qe^P} becomes
\begin{corollary}
$$ \exp(W_u) = \exp(\mathfrak{B}_{t,u}) \exp(\frac{\hbar}{2}Q^{W}_{t,u}) \exp(P_{t,u}) $$
and
\begin{equation}\label{Q^W_u=Theta(Q_u)}
Q^{W}_{t,u} = \Theta(Q_{u}(x,y))
\end{equation}
where
\begin{equation} \label{Qu}
 Q_u(x,y) := Q_{\mathbf{\omega}}(x,y)\mid_{ \omega_l = -\frac{B_{2l}}{2l(2l-1)} u^{2(2l-1)} } .
 \end{equation}
\end{corollary}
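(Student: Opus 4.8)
The plan is to obtain this corollary as a direct specialization of Proposition~\ref{e^W=e^Be^Qe^P} under the substitution~\eqref{omega->u}, rather than by re-running the Zassenhaus argument from scratch. The first step is to record that this substitution turns the operator $W_{\omega}$ of~\eqref{denW} into $W_u$ of~\eqref{eqn:Wu}: indeed $W_{\omega} = \sum_{l\geq 1}\omega_l D_l$, so setting $\omega_l = -\frac{B_{2l}}{2l(2l-1)}u^{2(2l-1)}$ gives exactly $-\sum_{l\geq 1}\frac{B_{2l}}{2l(2l-1)}u^{2(2l-1)}D_l = W_u$. By the definitions stated just before the corollary, $\mathfrak{B}_{t,u}$, $Q^{W}_{t,u}$ and $P_{t,u}$ are, by fiat, the operators obtained from $\mathfrak{B}_{t,\omega}$, $Q^{W}_{t,\omega}$ and $P_{t,\omega}$ by the same substitution. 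Hence, provided the substitution may legitimately be applied termwise to the identity of Proposition~\ref{e^W=e^Be^Qe^P}, we immediately get $\exp(W_u) = \exp(\mathfrak{B}_{t,u})\exp(\tfrac{\hbar}{2}Q^{W}_{t,u})\exp(P_{t,u})$.

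The only point requiring a word of justification is therefore the well-definedness of this substitution and its compatibility with the formal-series manipulations, and here I would exploit a homogeneity observation. Assign $u$ the weight $1$ and $\omega_l$ the weight $2(2l-1)$. Then in $R_i(\omega)$ (see~\eqref{Omega_i}) every monomial $\prod_{j=1}^n(-\omega_{l_j})$ with $\sum_j 2l_j = i+n$ has $u$-weight $\sum_j 2(2l_j-1) = 2(i+n)-2n = 2i$, so $R_i(\omega)$ becomes a scalar multiple of $u^{2i}$; likewise each monomial appearing in the expansion~\eqref{Q^Omega} of $Q_{\omega}(x,y)$ acquires a definite power of $u$. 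Since $\omega_l\mapsto -\frac{B_{2l}}{2l(2l-1)}u^{2(2l-1)}$ has positive $u$-valuation tending to infinity with $l$, it follows that $P_{t,\omega}$, $Q^{W}_{t,\omega}$ and (coefficientwise) $\mathfrak{B}_{t,\omega}$ pass to honest formal differential operators with coefficients in $\mathbb{Q}[[u]]$, that the finite sums describing their action on the coordinates $t_n^{\alpha}$ stay finite after substitution, and that the exponentials remain well defined (as in Lemma~\ref{lem:e^Be^Pt}, $\exp(\mathfrak{B}_{t,u})\exp(P_{t,u})\cdot t_n^{\alpha}$ is a finite sum). Therefore the identity of Proposition~\ref{e^W=e^Be^Qe^P}, being an identity of such formal operators valid for all $\omega$, remains valid after the substitution.

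For the second assertion~\eqref{Q^W_u=Theta(Q_u)}, I would simply note that the linear map $\Theta$ of~\eqref{Theta} is independent of $\omega$ (and of $u$): it only records how a power $x^i y^j$ is turned into $\sum_{\mu,\nu}\eta^{\mu\nu}\frac{\partial}{\partial t^{\mu}_{i}}\frac{\partial}{\partial t^{\nu}_{j}}$. Hence from $Q^{W}_{t,\omega} = \Theta(Q_{\omega}(x,y))$ (Proposition~\ref{e^W=e^Be^Qe^P}) and the definition~\eqref{Qu} of $Q_u(x,y)$ as $Q_{\omega}(x,y)$ after the substitution, applying the substitution inside $\Theta$ yields $Q^{W}_{t,u} = \Theta(Q_u(x,y))$. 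The main, and essentially only, obstacle is the bookkeeping of the middle paragraph, i.e.\ confirming that all the relevant formal series remain well defined under the substitution; once the weight computation is in place, the corollary is immediate.
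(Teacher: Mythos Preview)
Your proposal is correct and matches the paper's approach exactly: the paper also derives this corollary by simply applying the substitution~\eqref{omega->u} to Proposition~\ref{e^W=e^Be^Qe^P}, with no separate proof given. Your added homogeneity/weight check for well-definedness of the substitution is more careful than what the paper does, but the underlying argument is identical.
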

Moreover, setting $s_{2l-1} = -(2l-2)!u^{2(2l-1)}$ in Equation~\eqref{ZH=e^Be^Pe^QZ},  we have
\begin{equation} \label{ZHBPQZt}
Z_{H}^X(\mathbf{t},u) = \exp(\mathfrak{B}_{t,u}) \exp(P_{t,u}) \exp(\frac{\hbar}{2}Q^{W}_{t,u}) \cdot Z^X(\mathbf{t}) .
\end{equation}

{\bf Remark}:
Since Proposition \ref{e^W=e^Be^Qe^P} holds for any value of $\omega_l$, we can use it to study other types of Hodge integrals too.
For example, the generating function of Hodge integrals with $k$ $\lambda$-classes is given by
$$F_{g,H,k}^{X}(\mathbf{t},u_1, \ldots, u_k) = \sum\limits_{A \in H_2(X,\mathbb{Z})} Q^{A}
    \langle \prod_{j=1}^k \lambda(-u_j^2) \exp (\sum\limits_{i=0}^{\infty}\sum\limits_{\alpha=0}^N t^{\alpha}_i \tau_i(\gamma_{\alpha})) \rangle_{g,A}^{X}.$$
By equation \eqref{lambdach}, $F_{g,H,k}^{X}$ can be obtained from $F_{g,\mathbb{E}}^{X}(\mathbf{t}, \mathbf{s})$ after the substitution
\[ s_{2l-1} = -(2l-2)! \left( u_1^{2(2l-1)} + \ldots + u_k^{2(2l-1)} \right) \]
for all $l \geq 1$.
Let $W_{u,k}$, $\mathfrak{B}_{t,u, k}$, $Q^{W}_{t,u,k}$, $P_{t,u,k}$ be operators obtained from
$W_{\omega}$, $\mathfrak{B}_{t,\omega}$, $Q^{W}_{t,\omega}$, $P_{t,\omega}$ respectively after the substitution
\[ \omega_l = -\frac{B_{2l}}{2l(2l-1)} \left( u_1^{2(2l-1)} + \ldots + u_k^{2(2l-1)} \right)  \]
for all $l \geq 1$. By equation \eqref{Z=e^WZ} and Proposition \ref{e^W=e^Be^Qe^P}, we have
\begin{eqnarray*}
 && \exp(\sum_{g=0}^{\infty} \hbar^{g-1} F_{g,H,k}^{X}(\mathbf{t},u_1,\ldots,u_k) ) = \exp(W_{u,k}) \cdot Z^X \\
&=& \exp(\mathfrak{B}_{t,u,k}) \exp(\frac{\hbar}{2}Q^{W}_{t,u,k}) \exp(P_{t,u,k}) \cdot Z^X .
\end{eqnarray*}
We will investigate applications of this formula in future publications.
%\end{remark}

%\section{Preparation for Theorem \ref{theorem} }
\section{Substituition of Variables } \label{sec:change_variable}
Starting from this section, we will focus on the potential function $Z^{X}_{H}(\mathbf{t},u)$ for linear Hodge integrals.
We will change the variables $\mathbf{t}=(t_n^{\alpha})$  to variables $\mathbf{q}=(q_n^{\alpha})$.  Following \cite{LW}, we
introduce  a sequence of polynomials
\[ \phi_k(u,z) = \left( (u+z)^2z\frac{\partial}{\partial z} \right)^k z = \sum\limits_{j=1}^{2k+1}c_j^{(k)} u^{2k+1-j}z^j ,\]
where $c_j^{(k)}$ are some constants with the leading coefficient $c_{2k+1}^{(k)} = (2k-1)!!.$
In particular,  $\phi_0(u,z) = z.$ Replacing $z^m$ with $q_m$ in $\phi_k(u,z)$, we obtain the polynomial
$$ \tilde{\phi}_k(u,q) = \sum\limits_{j=1}^{2k+1}c_j^{(k)} u^{2k+1-j}q_j .$$
For any $0 \leq \alpha \leq N $, we define a map $\Lambda_{\alpha} $
from the space of functions of $(q_1, q_2, \ldots)$ to the space of functions of $\mathbf{q}=(q_n^{\alpha})$ which
 replaces $q_m$ by $q_m^{\alpha}$ for all $m \geq 1$.

 Moreover, the function $\mathfrak{B}(z)$ defined in \cite{LW} can be represented as
\begin{equation}
 \mathfrak{B}(z) = B^{\omega}(z)\mid_{ \omega_l = -\frac{B_{2l}}{2l(2l-1)} }.
 \end{equation}
Define constants $C_i$  by
\[ e^{\mathfrak{B}(z)} = \sum\limits_{i=0}^{\infty} C_i z^{-i}. \]
Then $C_0 = 1$ and
$$  C_i = R_i(\omega) \mid_{ \omega_l = -\frac{B_{2l}}{2l(2l-1)}  }$$
for all $i \geq 0$.
Since we need to perform substitution given by equation \eqref{omega->u} when studying linear Hodge integrals,
the following observation is
also useful:
\begin{equation} \label{BZ}
 \mathfrak{B}(z) = B^{\omega}(u^2 z)\mid_{ \omega_l = -\frac{B_{2l}}{2l(2l-1)} u^{2(2l-1)}}.
 \end{equation}
Using this equation, we can show that
\begin{equation} \label{CiRi}
u^{2i} C_i = R_i(\omega) \mid_{ \omega_l = -\frac{B_{2l}}{2l(2l-1)} u^{2(2l-1)} }
\end{equation}
and
\begin{equation} \label{QuQB}
Q_u(x, y) = u^2 Q^B(u^2 x, u^2 y),
\end{equation}
where $Q_u$ is defined by Equation \eqref{Qu} and $Q^B$ is defined by Equation \eqref{QB}.
Moreover the constants $C_i$ also satisfy the following identity (cf. Lemma 3 in \cite{LW}):
\begin{equation}\label{sumCC=0}
 \sum_{i=0}^n (-1)^{n-i}C_iC_{n-i} =0
\end{equation}
for $n\geq1$.

Define a  substitution of variables $ \mathbf{t}\rightarrow (u,\mathbf{q}) $ by
\begin{equation}\label{t->(u,q)}
t_n^{\alpha} =
\left \{ \begin{array}{cl} \Lambda_0(\tilde{\phi}_n(u,q)) + (-1)^{n}C_{n-1}u^{2(n-1)}, & \alpha = 0 \ and \ n\geq 2, \\
 \Lambda_{\alpha}(\tilde{\phi}_n(u,q)), & otherwise. \end{array} \right.
\end{equation}
In particular, if $u=0$, $ \mathbf{t}\rightarrow (0,\mathbf{q}) $ is given by a much simpler formula
 \[ t_k^{\alpha} = (2k-1)!!q_{2k+1}^{\alpha}.\]
We denote $$ Z_H^X (\mathbf{q},u) := Z_H^X (\mathbf{t},u)\mid_{\mathbf{t}\rightarrow (u,\mathbf{q})} ,$$
and $$ Z^X (\mathbf{q}) := Z_H^X (\mathbf{q},0) =  Z^X (\mathbf{t})\mid_{t_k^{\alpha} = (2k-1)!!q_{2k+1}^{\alpha} } .$$

\section{Decomposition of Virasoro operators}
\label{sec:decompVir}

Let $L_m$, $m \geq 1$, be the Virasoro operators defined by Equation~\eqref{Lm}. We can write $L_m$  as
\[ L_m = X_m + \frac{\hbar}{2} Y_m \]
 where
\[ X_m = \sum\limits_{k>0} \sum_{\alpha} (k+m)q_k^{\alpha}\frac{\partial}{\partial q_{k+m}^{\alpha}}, \quad
Y_m = \sum\limits_{a+b=m} \sum_{\mu, \nu} ab \eta^{\mu\nu} \frac{\partial^2}{\partial q_a^{\mu}\partial q_b^{\nu}}. \]
For convenience, we set $q_{n}^{\alpha}=0$ and $\frac{\partial}{\partial q_{n}^{\alpha}} =0$ if $n \leq 0$.
Let $$X_+ = \sum\limits_{m>0} a_mu^mX_m, \quad Y_+ = \sum\limits_{m>0} a_mu^mY_m,$$
where $a_m$ is defined by Equation~\eqref{a_m}.

When the projective variety $X$ is a point, $L_m $ is reduced to be
$$L_{m}^{pt} = \sum\limits_{k>0} (k+m)q_k\frac{\partial}{\partial q_{k+m}} + \frac{\hbar}{2}\sum\limits_{a+b=m} ab \frac{\partial^2}{\partial q_a\partial q_b}$$
for $m>0$,
and $X_m, Y_m, X_+, Y_+$ are reduced to be $X_{m}^{pt}, Y_{m}^{pt}, X^{pt}_+, Y^{pt}_+$ respectively.
It was proved in \cite{LW} that
\begin{equation} \label{e^Xq}
\exp(X^{pt}_+ )\cdot q_{2n+1} = \frac{1}{(2n-1)!!}\sum\limits_{i=0}^{n} C_iu^{2i} \tilde{\phi}_{n-i}(u,q).
 \end{equation}
 In fact, this formula follows from applying Proposition 5  in \cite{LW} to the case $G= \frac{t_n}{(2n-1)!!}$ and
 the computation before Equation (26) in \cite{LW}.

In \cite{LW}, the authors obtained the following decomposition using Zassenhaus formula:
$$\exp(\sum\limits_{m>0} a_mu^mL_{m}^{pt}) = \exp(X_{+}^{pt})\exp(\frac{\hbar}{2}Q^{pt}_+) ,$$
where $$Q^{pt}_+ = \sum\limits_{n=1}^{\infty} \frac{(-1)^{n-1}}{n!}ad_{X^{pt}_+}^{n-1}Y^{pt}_+$$
(cf. Proposition 7 in \cite{LW}).
The precise form of $Q^{pt}_+$ was also computed in \cite{LW}, which will not be needed in this paper. What we need is only Proposition 8 in \cite{LW} which can be stated in the following form:
\begin{equation} \label{Q+QW}
Q^{pt}_{+,odd} = Q_{q,u}^{W, pt},
\end{equation}
 where $Q^{pt}_{+,odd}$ is the sum of terms in $Q^{pt}_+$ involving only odd variables $q_{2k+1}$ and
 \begin{equation} \label{Q_q^W}
 Q_{q,u}^{W,pt} = \Theta^{pt}(Q_u(x,y))\mid_{t_k = (2k-1)!!q_{2k+1}}.
 \end{equation}
Note that the definition of $Q_{q,u}^{W,pt}$ given above coincide with the operator $Q_q^W$ in \cite{LW} due to Equation~\eqref{QuQB}.
It is also equal to the operator $Q_{t,u}^{W}$ defined by Equation~\eqref{Q^W_u=Theta(Q_u)}  after the substitution $t_k = (2k-1)!!q_{2k+1}$
in the case where the projective space $X$ is a point.

Similarly, since \[ \sum\limits_{m>0} a_m u^m L_m = X_+ + \frac{\hbar}{2} Y_+,\] we can use Zassenhaus formula to obtain a decomposition
\begin{equation}\label{e^L=e^Xe^Q}
\exp(\sum\limits_{m>0} a_m u^m L_m) = \exp(X_+)\exp(\frac{\hbar}{2}Q_+) ,
\end{equation}
where $$Q_+ = \sum\limits_{n=1}^{\infty} \frac{(-1)^{n-1}}{n!}ad_{X_+}^{n-1}Y_+. $$

Let ${\rm Diff}(\mathbf{q})$ be the space of second order differential operators in variables
$\mathbf{q}=(q_n^{\alpha})$ with constant coefficients.
Let ${\rm Diff}(\mathbf{q}^{pt})$ be the space of second order differential operators in variables
$\mathbf{q}^{pt}=(q_n)$ with constant coefficients.
Define a linear map \[ \Delta: {\rm Diff}(\mathbf{q}^{pt}) \longrightarrow {\rm Diff}(\mathbf{q}) \]  by
 \[ \Delta \left( \frac{\partial^2}{\partial q_m \partial q_n} \right) = \sum_{\mu, \nu} \eta^{\mu\nu} \frac{\partial^2}{\partial q_m^{\mu}\partial q_n^{\nu}}  \]
for all $m$ and $n$.
By induction on $k$, we have $$ad_{X_+}^{k}Y_+ = \Delta ( ad_{X^{pt}_+}^{k}Y^{pt}_+ ) $$ for all $k \geq 0$.
Consequently,
\[  Q_+ = \Delta (Q^{pt}_+) . \]
Let $ Q_{+,odd} $ be the sum of all terms in $Q_+$ which only involve odd variables $q_{2k+1}^{\alpha}$. We have
\begin{equation}\label{Q=Delta(Q)}
Q_{+,odd} = \Delta (Q^{pt}_{+,odd}) .
\end{equation}

\section{Proof of Theorem \ref{theorem}}
\label{sec:proofThm}

After performing the substitution of variables $\mathbf{t} \rightarrow (u, \mathbf{q})$ as given in Equation~\eqref{t->(u,q)},
Equation~\eqref{ZHBPQZt} becomes
\begin{equation} \label{ZHBPQZq}
Z_{H}^X(\mathbf{q},u) = \left\{ \left. \exp(\mathfrak{B}_{t,u}) \exp(P_{t,u}) \exp(\frac{\hbar}{2}Q^{W}_{t,u}) \cdot Z^X(\mathbf{t}) \right\} \right|_{\mathbf{t} \rightarrow (u, \mathbf{q})}.
\end{equation}
By Equation~\eqref{e^L=e^Xe^Q}, to prove Theorem \ref{theorem}, we need to compare the right hand side of this equation with
\begin{equation} \label{RHSThm}
 \exp(X_+)\exp(\frac{\hbar}{2}Q_+) \cdot \left\{ \left. Z^X (\mathbf{t}) \right|_{t_k^{\alpha} = (2k-1)!!q_{2k+1}^{\alpha}} \right\}.
\end{equation}
We need the following lemma:
\begin{lemma}\label{lemma}
For any formal power series $G$ in $\mathbf{t}$ and $u$,
\begin{equation}\label{e^Xe^PG=e^XG}
\left\{ \exp(\mathfrak{B}_{t,u}) \exp(P_{t,u}) \cdot G \right\}\mid_{\mathbf{t}\rightarrow (u,\mathbf{q})}
= \exp(X_+) \cdot \left\{ G\mid_{t_k^{\alpha} = (2k-1)!!q_{2k+1}^{\alpha}} \right\}.
\end{equation}
\end{lemma}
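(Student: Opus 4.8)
The plan is to reduce the identity to a statement about how the two changes of variable interact on monomials, and then to verify it coordinate‑by‑coordinate. Both sides of \eqref{e^Xe^PG=e^XG} are obtained by applying an operator which acts as a substitution of variables followed by the specialization $t^\alpha_k \mapsto (2k-1)!!\,q^\alpha_{2k+1}$ (on the right) or $\mathbf t\to(u,\mathbf q)$ (on the left). Since $\mathfrak B_{t,u}$ and $P_{t,u}$ are first order differential operators, $\exp(\mathfrak B_{t,u})\exp(P_{t,u})$ acts on any formal power series $G$ simply by replacing each $t^\alpha_n$ with $\hat t^\alpha_n$, the images computed in Lemma~\ref{lem:e^Be^Pt} (specialized via \eqref{omega->u}, so that $R_i(\omega)$ becomes $u^{2i}C_i$ by \eqref{CiRi}). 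Likewise $X_+$ is a first order operator, so $\exp(X_+)$ acts on $G|_{t_k^\alpha=(2k-1)!!q_{2k+1}^\alpha}$ by replacing each variable $q^\alpha_{2k+1}$ with $\exp(X_+)\cdot q^\alpha_{2k+1}$. Therefore it suffices to check that, after the substitution $\mathbf t\to(u,\mathbf q)$ on the left, the resulting function of $\mathbf q$ agrees with the one produced on the right; equivalently, that for every index $n$ and every $\alpha$,
\begin{equation}\label{eq:keycheck}
\hat t_n^\alpha \big|_{\mathbf t\to(u,\mathbf q)}
= \Big( \text{coefficient of } t^\alpha_n \text{ in } G \text{ sees } \Big)\ \text{the value } \left\{ t^\alpha_n = (2k-1)!!q^\alpha_{2k+1}\right\}\ \text{after applying } \exp(X_+).
\end{equation}

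More precisely, the right‑hand side expands as follows: apply the specialization $t^\alpha_k=(2k-1)!!q^\alpha_{2k+1}$ to $G$ first, then apply $\exp(X_+)$, which replaces each odd variable $q^\alpha_{2n+1}$ by $\exp(X_+)\cdot q^\alpha_{2n+1}$. Because $X_+=\sum_m a_m u^m X_m$ is the $\Lambda_\alpha$‑image (summed over $\alpha$) of $X^{pt}_+$, and $X_m$ acts diagonally on the $\alpha$‑index, the action of $\exp(X_+)$ on $q^\alpha_{2n+1}$ is the $\Lambda_\alpha$‑image of \eqref{e^Xq}, namely
\[
\exp(X_+)\cdot q^\alpha_{2n+1} = \frac{1}{(2n-1)!!}\sum_{i=0}^{n} C_i u^{2i}\,\Lambda_\alpha\big(\tilde\phi_{n-i}(u,q)\big).
\]
Hence the effective substitution on the right‑hand side sends $t^\alpha_n$ (for $n=2k+1$ odd) to $\sum_{i=0}^{k}C_i u^{2i}\Lambda_\alpha(\tilde\phi_{k-i}(u,q))$, and $t^\alpha_n$ for $n$ even to the result of applying $\exp(X_+)$ after the specialization; but note the specialization sends $t^\alpha_n$ with $n$ even to $0$, so one must track how $\exp(X_+)$ can still produce contributions — it cannot, since $X_+$ only raises odd indices from odd indices. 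The claim is then that this matches, term by term, the substitution $t^\alpha_n\to \hat t^\alpha_n|_{\mathbf t\to(u,\mathbf q)}$ on the left, where by Lemma~\ref{lem:e^Be^Pt} and \eqref{t->(u,q)},
\[
\hat t^\alpha_n\big|_{\mathbf t\to(u,\mathbf q)} = \sum_{i=0}^n u^{2i}C_i\, t^\alpha_{n-i}\big|_{\mathbf t\to(u,\mathbf q)} - \delta_{\alpha,0}u^{2(n-1)}C_{n-1}
\]
for $n\ge 2$ (with the obvious modifications for $n=0,1$), and $t^\alpha_{n-i}|_{\mathbf t\to(u,\mathbf q)}$ is given by \eqref{t->(u,q)}. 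The constant shift $(-1)^nC_{n-1}u^{2(n-1)}$ built into \eqref{t->(u,q)} for $\alpha=0$ is exactly designed to cancel the $-R_{n-1}(\omega)=-u^{2(n-1)}C_{n-1}$ term in Lemma~\ref{lem:e^Be^Pt}; the fact that $\exp(X_+)$ contributes no such constant on the right is consistent because the right‑hand substitution has no inhomogeneous term for the odd variables.

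The computation therefore reduces to the purely combinatorial identity
\[
\sum_{i=0}^{n} u^{2i} C_i\, \tilde\phi_{n-i}(u,q)\ \text{(suitably interpreted)}\ =\ \text{what one gets by substituting}\ t_k=(2k-1)!!q_{2k+1}\ \text{then applying}\ \exp(X_+),
\]
i.e.\ the statement that the double substitution "$t\to\hat t$ then $\hat t\to(u,\mathbf q)$" equals "$t\to(u,\mathbf q)$ directly composed with $\exp(X_+)$". This is an $\alpha$‑indexed copy of the corresponding point‑case identity, which is already implicit in \cite{LW} (the computation around Equation (26) there together with \eqref{e^Xq}); since $\Lambda_\alpha$ commutes with all the operators in sight on each fixed $\alpha$‑slot and the only coupling between different $\alpha$'s in $Q$‑type operators does not appear here, the point‑case verification transfers verbatim. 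I expect the main obstacle to be purely bookkeeping: carefully matching the inhomogeneous ($\alpha=0$) terms and the index ranges, and confirming that the generating‑function identity \eqref{BZ}–\eqref{CiRi} correctly converts between the $R_i(\omega)$ normalization of Section~\ref{sec:decompW} and the $C_i u^{2i}$ normalization used after the substitution \eqref{omega->u}. Once the coordinate identity \eqref{eq:keycheck} is established for all $n,\alpha$, the lemma follows because both sides of \eqref{e^Xe^PG=e^XG} are obtained from $G$ by the same substitution of variables.
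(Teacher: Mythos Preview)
Your overall strategy is exactly the paper's: reduce to $G=t_n^\alpha$ using that all operators involved are first order, compute the left side via Lemma~\ref{lem:e^Be^Pt} together with \eqref{CiRi}, and compute the right side by observing $\exp(X_+)\cdot q_{2n+1}^\alpha=\Lambda_\alpha(\exp(X_+^{pt})\cdot q_{2n+1})$ and invoking \eqref{e^Xq}. So the route is the same.

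However, there are two genuine slips in your write-up. First, the specialization $t_k^\alpha=(2k-1)!!\,q_{2k+1}^\alpha$ does \emph{not} send any $t_n^\alpha$ to $0$; every $t_n^\alpha$ (for $n\ge0$, any parity) lands on an odd $q$-variable. Your paragraph about ``$t_n^\alpha$ for $n$ even'' going to zero is simply a misreading of which index is odd. Second, and more substantively, your description of the $\alpha=0$ cancellation is incorrect: the inhomogeneous term $(-1)^n C_{n-1}u^{2(n-1)}$ in \eqref{t->(u,q)} does \emph{not} by itself cancel the $-R_{n-1}(\omega)=-C_{n-1}u^{2(n-1)}$ from Lemma~\ref{lem:e^Be^Pt} (for odd $n$ they even have the same sign). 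What actually happens is that after substituting \eqref{t->(u,q)} into $\sum_{i=0}^n C_i u^{2i}t^0_{n-i}-C_{n-1}u^{2(n-1)}$ you get a whole string of constants
\[
\sum_{i=0}^{n-2}(-1)^{n-i}C_iC_{n-1-i}\,u^{2(n-1)}-C_{n-1}u^{2(n-1)}
=-u^{2(n-1)}\sum_{i=0}^{n-1}(-1)^{n-1-i}C_iC_{n-1-i},
\]
and this vanishes only by the identity \eqref{sumCC=0}. You never invoke \eqref{sumCC=0}, and without it the $\alpha=0$, $n\ge2$ case does not close. Once you fix the indexing confusion and insert \eqref{sumCC=0} at this step, your argument becomes the paper's proof verbatim.
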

{\bf Remark}: This lemma is analogous to Proposition 5 in \cite{LW}. But they are different in the sense
 that the formula in \cite{LW} does not involve operator $\exp(P_{t,u})$.
For the point case, the action of $\exp(P_{t,u})$ on the Kontsevich-Witten tau-function
can be transformed into actions of the Virasoro operators due to the Virasoro constraints.
This is one ingredient in the proof of Alexandrov's conjecture in \cite{LW}.
However, this method fails in our case since we do not have the relevant form of Virasoro constraints for
 general projective varieties.  In this lemma, we include $\exp(P_{t,u})$ in the above formula in order to
 avoid using Virasoro constraints. Consequently we also need to choose a different substitution of variables
 to accomodate the appearance of this operator.

{\bf Proof of Lemma~\ref{lemma}}:
Notice that $ \mathfrak{B}_{t,u}, P_{t,u}, X_+ $ are all first order differential operators. So actions of
exponential of these operators on formal power series behave as change of variables.
Hence we only need to consider the case $G = t_n^{\alpha}$ for all $n$ and $\alpha$.
In this case, Equation~\eqref{e^Xe^PG=e^XG}
becomes
\begin{equation}\label{e^Xe^PG=e^Xtn}
\left\{ \exp(\mathfrak{B}_{t,u}) \exp(P_{t,u}) \cdot t_n^{\alpha} \right\}\mid_{\mathbf{t}\rightarrow (u,\mathbf{q})}
= (2n-1)!! \exp(X_+) \cdot q_{2n+1}^{\alpha} .
\end{equation}
We can compute the left hand side of this equation by setting $\omega_l = -\frac{B_{2l}}{2l(2l-1)} u^{2(2l-1)}$ for all $l$ in Equation~\eqref{e^Be^Pt}  and combining it with Equation~\eqref{CiRi}.

If $\alpha = 0 $ and $n\geq2$, the left hand side of Equation~\eqref{e^Xe^PG=e^Xtn} is
\begin{eqnarray*}
LHS &=& \left. \left\{ \sum\limits_{i=0}^{n}C_iu^{2i} t^{0}_{n-i}-C_{n-1}u^{2(n-1)} \right\}
    \right|_{\mathbf{t}\rightarrow (u,\mathbf{q})} \\
&=& \sum\limits_{i=0}^{n-2}C_iu^{2i} \left\{  \Lambda_0(\tilde{\phi}_{n-i}(u,q)) + (-1)^{n-i}C_{n-i-1}u^{2(n-i-1)} \right\} \\
&& + \sum\limits_{i=n-1}^{n}C_iu^{2i} \Lambda_0(\tilde{\phi}_{n-i}(u,q)) -C_{n-1}u^{2(n-1)} \\
&=& \sum\limits_{i=0}^{n}C_iu^{2i} \Lambda_0(\tilde{\phi}_{n-i}(u,q)) - \sum\limits_{i=0}^{n-2}(-1)^{n-1-i}C_iC_{n-1-i}u^{2(n-1)} -C_{n-1}u^{2(n-1)} \\
&=& \sum\limits_{i=0}^{n}C_iu^{2i} \Lambda_0(\tilde{\phi}_{n-i}(u,q)),
\end{eqnarray*}
where the last equality holds because of Equation~\eqref{sumCC=0}.

For all other $n$ and $\alpha$, we have
\[ LHS = \left. \left\{ \sum\limits_{i=0}^{n}C_iu^{2i} t^{\alpha}_{n-i} \right\}
    \right|_{\mathbf{t}\rightarrow (u,\mathbf{q})}
     = \sum\limits_{i=0}^{n}C_iu^{2i} \Lambda_{\alpha}(\tilde{\phi}_{n-i}(u,q)) . \]

To compute the right hand side of Equation~\eqref{e^Xe^PG=e^Xtn}, we first notice that
 \[ X_m \cdot q_{2n+1}^{\alpha} = (2n+1)q_{2n+1-m}^{\alpha} = \Lambda_{\alpha}((2n+1)q_{2n+1-m}) = \Lambda_{\alpha}(X^{pt}_{m} \cdot q_{2n+1}) \]
 for all $m$, $n$ and $\alpha$.
Therefore
\begin{eqnarray*}
&& ( X_{m_1} \cdots X_{m_k} ) \cdot q_{2n+1}^{\alpha} \\
&=& (2n+1)(2n+1-m_k)\cdots (2n+1-\sum\limits_{i=2}^{k}m_i)q^{\alpha}_{2n+1-m_1-\cdots-m_k}  \\
&=& \Lambda_{\alpha}( (X_{m_1}^{pt} \cdots X_{m_k}^{pt} ) \cdot q_{2n+1} ).
\end{eqnarray*}
Consequently, the right hand side of Equation~\eqref{e^Xe^PG=e^Xtn} is equal to
$$ RHS = (2n-1)!! \exp(X_+) \cdot q_{2n+1}^{\alpha} = (2n-1)!!\Lambda_{\alpha}(\exp(X^{pt}_+) \cdot q_{2n+1} ) .$$
Using $\Lambda_{\alpha}$ to act on  both sides of Equation~\eqref{e^Xq}, we can conclude
$$ LHS = RHS $$
for all $n$ and $\alpha$. The lemma is thus proved.
$\Box$

We are now ready to prove Theorem \ref{theorem}.

{\bf Proof of Theorem \ref{theorem}}:
By Equation~\eqref{ZHBPQZq} and Lemma \ref{lemma}, we have
\begin{eqnarray}
Z_H^X (\mathbf{q},u) &=& \exp(X_+) \left\{ \left. \left( \exp(\frac{\hbar}{2}Q^{W}_{t,u})\cdot Z^X(\mathbf{t}) \right) \right|_{t_k^{\alpha} = (2k-1)!!q_{2k+1}^{\alpha}} \right\}  \nonumber \\
&=& \exp(X_+) \exp(\frac{\hbar}{2}Q^{W}_{q,u})\cdot Z^X(\mathbf{q}) \label{ZHX+QZ}
\end{eqnarray}
where
\begin{eqnarray*}
Q^{W}_{q,u} &=& Q^{W}_{t,u} \mid_{t_k^{\alpha} = (2k-1)!!q_{2k+1}^{\alpha}} \\
&=& \Theta(Q_{u}(x,y))\mid_{t_k^{\alpha} = (2k-1)!!q_{2k+1}^{\alpha}} .
\end{eqnarray*}
The last equality comes from Equation~\eqref{Q^W_u=Theta(Q_u)}.

By Equation~\eqref{Q=Delta(Q)}, Equation~\eqref{Q+QW}, and Equation~\eqref{Q_q^W},  we have
\begin{eqnarray*}
Q_{+,odd} &=& \Delta(Q^{pt}_{+,odd}) = \Delta(Q^{W, pt}_{q,u}) \\
&=& \Delta\left(\Theta^{pt}(Q_{u}(x,y))\mid_{t_k = (2k-1)!!q_{2k+1}}\right) .
\end{eqnarray*}
For any monomial $x^iy^j$,
\begin{eqnarray*}
\Theta(x^iy^j)\mid_{t_k^{\alpha} = (2k-1)!!q_{2k+1}^{\alpha}} &=& \sum\limits_{\mu\nu}\frac{1}{(2i-1)!!(2j-1)!!} \eta^{\mu\nu}\frac{\partial}{\partial q^{\mu}_{2i+1}} \frac{\partial}{\partial q^{\nu}_{2j+1}} \\
&=& \Delta(\Theta^{pt}(x^iy^j)\mid_{t_k = (2k-1)!!q_{2k+1}}) .
\end{eqnarray*}
By linearity of $\Theta$, $\Theta^{pt}$, and $\Delta$,  we have
\[ \Theta(Q_{u}(x,y))\mid_{t_k^{\alpha} = (2k-1)!!q_{2k+1}^{\alpha}} =\Delta(\Theta^{pt}(Q_{u}(x,y))\mid_{t_k = (2k-1)!!q_{2k+1}}). \] Consequently,
$$Q^{W}_{q,u} = Q_{+,odd}.$$
Since $Z^X(\mathbf{q})$ only contains odd variables $q_{2k+1}^{\alpha}$,
$$ \exp(\frac{\hbar}{2}Q^{W}_{q,u}) \cdot Z^X(\mathbf{q}) = \exp(\frac{\hbar}{2}Q_+) \cdot Z^X(\mathbf{q}). $$
Therefore, by Equation~\eqref{ZHX+QZ} and Equation~\eqref{e^L=e^Xe^Q}, we have
\begin{eqnarray*}
Z_H^X (\mathbf{q},u) &=& \exp(X_+) \exp(\frac{\hbar}{2}Q_+) \cdot Z^X(\mathbf{q}) \\
&=& \exp(\sum\limits_{m>0} a_mu^mL_m) \cdot Z^X(\mathbf{q}).
\end{eqnarray*}
This completes the proof of Theorem~\ref{theorem}.
$\Box$

\appendix
\section{Virasoro relations}\label{app1}

In this appendix, we prove that operators $L_m = X_m + \frac{\hbar}{2}Y_m $ defined by Equation~\eqref{Lm} satisfy the Virasoro bracket
relation.

\begin{lemma}
  $$[ L_m, L_n ] = (m-n)L_{m+n} $$
  for all $m, n \geq 1$.
\end{lemma}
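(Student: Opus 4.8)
The plan is to expand the commutator by bilinearity,
\[ [L_m,L_n]=[X_m,X_n]+\frac{\hbar}{2}\bigl([X_m,Y_n]+[Y_m,X_n]\bigr)+\frac{\hbar^2}{4}[Y_m,Y_n], \]
and match the three resulting pieces against $(m-n)L_{m+n}=(m-n)X_{m+n}+\frac{\hbar}{2}(m-n)Y_{m+n}$. The $\hbar^2$-term vanishes immediately, since $Y_m$ and $Y_n$ are differential operators with constant coefficients and therefore commute.

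For the first-order part I would note that $X_m=\sum_\alpha X_m^{(\alpha)}$ with $X_m^{(\alpha)}=\sum_{k>0}(k+m)q_k^\alpha\,\partial/\partial q_{k+m}^\alpha$ involving only the variables $\{q_\bullet^\alpha\}$; operators attached to distinct $\alpha$ commute, so $[X_m,X_n]=\sum_\alpha[X_m^{(\alpha)},X_n^{(\alpha)}]$. Each $X_m^{(\alpha)}$ is a copy of the standard Witt-algebra vector field, and applying it to a coordinate gives $X_n^{(\alpha)}q_j^\alpha=j\,q_{j-n}^\alpha$ (with the convention $q_{\le 0}^\alpha:=0$); hence $[X_m^{(\alpha)},X_n^{(\alpha)}]q_j^\alpha=j(m-n)q_{j-m-n}^\alpha=(m-n)X_{m+n}^{(\alpha)}q_j^\alpha$. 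Since both sides are first-order operators, this yields $[X_m,X_n]=(m-n)X_{m+n}$.

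The substance lies in the cross term. The basic identity is $[X_m,\partial/\partial q_a^\mu]=-(a+m)\,\partial/\partial q_{a+m}^\mu$. Combining it with the Leibniz rule, the symmetry $\eta^{\mu\nu}=\eta^{\nu\mu}$, and the commutativity of the partials, one obtains
\[ [X_m,Y_n]=-2\sum_{a+b=n}\sum_{\mu,\nu}ab(a+m)\,\eta^{\mu\nu}\frac{\partial}{\partial q_{a+m}^\mu}\frac{\partial}{\partial q_b^\nu}, \]
and after the reindexing $c=a+m$, $d=b$ this becomes a sum over $c+d=m+n$, $m+1\le c\le m+n-1$, of $-2\,c(c-m)d\,\eta^{\mu\nu}\partial_{q_c^\mu}\partial_{q_d^\nu}$; likewise $[Y_m,X_n]=-[X_n,Y_m]$ gives a sum over $n+1\le c\le m+n-1$ of $+2\,c(c-n)d\,\eta^{\mu\nu}\partial_{q_c^\mu}\partial_{q_d^\nu}$. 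Adding the two and symmetrizing the coefficient under $(c,\mu)\leftrightarrow(d,\nu)$ — legitimate because $\sum_{\mu,\nu}\eta^{\mu\nu}\partial_{q_c^\mu}\partial_{q_d^\nu}$ is symmetric — the coefficient of $\sum_{\mu,\nu}\eta^{\mu\nu}\partial_{q_c^\mu}\partial_{q_d^\nu}$ becomes $cd\bigl[(c-n)_+ + (d-n)_+ - (c-m)_+ - (d-m)_+\bigr]$, where $(x)_+:=\max(x,0)$.

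The one genuine obstacle is then a finite elementary identity: assuming $m\ge n$ without loss of generality (the claimed relation is antisymmetric, and $m=n$ is trivial), one has $(c-n)_+ + (d-n)_+ - (c-m)_+ - (d-m)_+ = m-n$ for every $c,d\ge1$ with $c+d=m+n$. I would prove this by observing that $x\mapsto(x-n)_+-(x-m)_+$ is a clamped ramp — equal to $0$ for $x\le n$, to $x-n$ for $n\le x\le m$, and to $m-n$ for $x\ge m$ — and checking the three regimes $c\le n$, $n<c\le m$, $c>m$, in each of which the constraint $c+d=m+n$ forces $d$ into the complementary regime so that the two ramps add up to $m-n$. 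This yields $[X_m,Y_n]+[Y_m,X_n]=(m-n)Y_{m+n}$, and assembling the three pieces proves $[L_m,L_n]=(m-n)L_{m+n}$. I expect the index bookkeeping in the reindexing and symmetrization to be the most delicate point; the remaining verifications are routine.
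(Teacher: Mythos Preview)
Your argument is correct and follows the same overall strategy as the paper: split $L_m=X_m+\frac{\hbar}{2}Y_m$, note $[Y_m,Y_n]=0$, and verify $[X_m,X_n]=(m-n)X_{m+n}$ and $[X_m,Y_n]+[Y_m,X_n]=(m-n)Y_{m+n}$ separately. The only difference is cosmetic bookkeeping in the cross term --- the paper breaks $[X_m,Y_n]$ and $[Y_m,X_n]$ into four pieces with explicit index ranges and recombines them, whereas you symmetrize and encode the same range conditions via the ramp function $(x)_+$; both routes amount to the same finite case check.
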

\begin{proof}
Since  $[Y_m,Y_n] = 0,$
$$ [ L_m, L_n ] = [X_m,X_n] + \frac{\hbar}{2}[X_m,Y_n] + \frac{\hbar}{2}[Y_m,X_n] .$$
It suffices if we can prove
\begin{equation} \label{bracketXX}
[X_m,X_n] = (m-n)X_{m+n}
\end{equation}
and
\begin{equation} \label{bracketXY}
[X_m,Y_n] + [Y_m,X_n] = (m-n)Y_{m+n}.
\end{equation}

Equation \eqref{bracketXX} follows from a straightforward computation. In fact
\begin{eqnarray*}
[X_m,X_n] &=& [ \sum\limits_{\alpha_1,k_1} (k_1+m) q_{k_1}^{\alpha_1}\frac{\partial}{\partial q_{k_1+m}^{\alpha_1}}, \sum\limits_{\alpha_2,k_2} (k_2+n) q_{k_2}^{\alpha_2}\frac{\partial}{\partial q_{k_2+n}^{\alpha_2}} ] \\
% &=& \sum\limits_{\alpha_1,k_1,\alpha_2,k_2} (k_1+m)(k_2+n) q_{k_1}^{\alpha_1}\frac{\partial}{\partial q_{k_2+n}^{\alpha_2}} (
% \frac{\partial}{\partial q_{k_1+m}^{\alpha_1}} \cdot q_{k_2}^{\alpha_2} )  \\
% && -\sum\limits_{\alpha_1,k_1,\alpha_2,k_2} (k_1+m)(k_2+n) q_{k_2}^{\alpha_2}\frac{\partial}{\partial q_{k_1+m}^{\alpha_1}} (
% \frac{\partial}{\partial q_{k_2+n}^{\alpha_2}} \cdot q_{k_1}^{\alpha_1} )  \\
&=&  \sum\limits_{\alpha_1,k_1} (k_1+m)(k_1+m+n) q_{k_1}^{\alpha_1}\frac{\partial}{\partial q_{k_1+m+n}^{\alpha_1}}  \\
&& - \sum\limits_{\alpha_2,k_2} (k_2+n+m)(k_2+n) q_{k_2}^{\alpha_2}\frac{\partial}{\partial q_{k_2+n+m}^{\alpha_2}}  \\
&=& \sum\limits_{\alpha,k} (m-n)(k+m+n) q_{k}^{\alpha}\frac{\partial}{\partial q_{k+m+n}^{\alpha}}  \\
&=& (m-n)X_{m+n}.
\end{eqnarray*}

To prove Equation \eqref{bracketXY}, we first compute
\begin{eqnarray*}
[X_m,Y_n] &=&  [ \sum\limits_{\alpha,k} (k+m) q_{k}^{\alpha}\frac{\partial}{\partial q_{k+m}^{\alpha}}, \sum\limits_{a+b=n} ab \eta^{\mu\nu} \frac{\partial^2}{\partial q_a^{\mu}\partial q_b^{\nu}} ]\\
% &=& -\sum_{\substack{\alpha,k \\ a+b=n }} ab(k+m) \eta^{\mu\nu}\frac{\partial}{\partial q_{k+m}^{\alpha}} (\frac{\partial^2}{\partial
% q_a^{\mu}\partial q_b^{\nu}} \cdot q_{k}^{\alpha})  \\
&=& -\sum_{a+b=n} ab(a+m) \eta^{\mu\nu}\frac{\partial}{\partial q_{a+m}^{\mu}} \frac{\partial}{\partial q_b^{\nu}}  - \sum_{a+b=n} ab(b+m) \eta^{\mu\nu}\frac{\partial}{\partial q_{b+m}^{\nu}} \frac{\partial}{\partial q_a^{\mu}}  \\
%&=& -\sum_{\substack{a+b=m+n \\ b:1 \rightarrow n-1 }} (a-m)ba \eta^{\mu\nu}\frac{\partial}{\partial q_{a}^{\mu}} \frac{\partial}{\partial q_b^{\nu}}  - \sum_{\substack{a+b=m+n \\ a:1\rightarrow n-1 }} a(b-m)b \eta^{\mu\nu}\frac{\partial}{\partial q_{b}^{\nu}} \frac{\partial}{\partial q_a^{\mu}}  \\
&=& \uppercase\expandafter{\romannumeral 1} + \uppercase\expandafter{\romannumeral 2} .
\end{eqnarray*}
where
$$ \uppercase\expandafter{\romannumeral 1}
= -\sum_{\substack{a+b=m+n \\ 1 \leq b \leq n-1 }} (a-m)ba \eta^{\mu\nu}\frac{\partial}{\partial q_{a}^{\mu}} \frac{\partial}{\partial q_b^{\nu}}
= \sum_{\substack{a+b=m+n \\ 1 \leq b \leq n-1 }} (b-n)ba \eta^{\mu\nu}\frac{\partial}{\partial q_{a}^{\mu}} \frac{\partial}{\partial q_b^{\nu}} ,$$
and
$$ \uppercase\expandafter{\romannumeral 2} = -\sum_{\substack{a+b=m+n \\ 1 \leq a \leq n-1 }} a(b-m)b \eta^{\mu\nu}\frac{\partial}{\partial q_{b}^{\nu}} \frac{\partial}{\partial q_a^{\mu}} = \sum_{\substack{a+b=m+n \\ m+1\leq b \leq m+n-1 }} a(m-b)b \eta^{\mu\nu}\frac{\partial}{\partial q_{b}^{\nu}} \frac{\partial}{\partial q_a^{\mu}} .$$
Similarly, $$[Y_m,X_n] = \uppercase\expandafter{\romannumeral 3} + \uppercase\expandafter{\romannumeral 4} $$
where
\begin{eqnarray*}
\uppercase\expandafter{\romannumeral 3} &=& \sum_{\substack{a+b=m+n \\ 1 \leq b \leq m-1 }} (a-n)ba \eta^{\mu\nu}\frac{\partial}{\partial q_{a}^{\mu}} \frac{\partial}{\partial q_b^{\nu}}  \\
% &=&  \sum_{\substack{a+b=m+n \\ 1 \leq b \leq m-1 }} (m-b)ba
% \eta^{\mu\nu}\frac{\partial}{\partial q_{a}^{\mu}} \frac{\partial}{\partial q_b^{\nu}}  \\
&=& \sum_{\substack{a+b=m+n \\ 1 \leq b \leq m }} (m-b)ba \eta^{\mu\nu}\frac{\partial}{\partial q_{a}^{\mu}} \frac{\partial}{\partial q_b^{\nu}} ,
\end{eqnarray*}
and
\begin{eqnarray*}
\uppercase\expandafter{\romannumeral 4}
&=& \sum_{\substack{a+b=m+n \\ 1 \leq a \leq m-1 }} a(b-n)b \eta^{\mu\nu}\frac{\partial}{\partial q_{b}^{\nu}} \frac{\partial}{\partial q_a^{\mu}}  \\
% &=& \sum_{\substack{a+b=m+n \\ n+1 \leq b \leq  m+n-1 }} a(b-n)b
% \eta^{\mu\nu}\frac{\partial}{\partial q_{b}^{\nu}} \frac{\partial}{\partial q_a^{\mu}}  \\
&=& \sum_{\substack{a+b=m+n \\ n \leq b \leq  m+n-1 }} a(b-n)b \eta^{\mu\nu}\frac{\partial}{\partial q_{b}^{\nu}} \frac{\partial}{\partial q_a^{\mu}} .
\end{eqnarray*}
Hence
 $$ \uppercase\expandafter{\romannumeral 2} + \uppercase\expandafter{\romannumeral 3} = \sum_{a+b=m+n} (m-b)ab \eta^{\mu\nu}\frac{\partial}{\partial q_{a}^{\mu}} \frac{\partial}{\partial q_b^{\nu}} ,$$
$$ \uppercase\expandafter{\romannumeral 1} + \uppercase\expandafter{\romannumeral 4} = \sum_{a+b=m+n} (b-n)ab \eta^{\mu\nu}\frac{\partial}{\partial q_{a}^{\mu}} \frac{\partial}{\partial q_b^{\nu}} .$$
Adding these two formulas together, we get Equation \eqref{bracketXY}.
\end{proof}

%%%%%%%%%%%%%%%%%%%%%%%%%%%%%%%%%%%%%%%%%%%%%%%%%%%%%%%%%%%%%%%%%%
%       Bibliography
%\bibliographystyle{plain}
%\bibliography{bibfile}

%%%%%%%%%%%%%%%%%%%%%%%%%%%%%%%%%%%%%%%%%%%%%%%%%%%

\vspace{30pt} \noindent
Xiaobo Liu \\
School of Mathematical Sciences \& \\
Beijing International Center for Mathematical Research, \\
Peking University, Beijing, China. \\
Email: {\it xbliu@math.pku.edu.cn}

\vspace{30pt} \noindent
Haijiang Yu \\
Beijing International Center for Mathematical Research, \\
Peking University, Beijing, China. \\
Email: {\it yhjmath@pku.edu.cn}

\end{document}